\definecolor{myred}{RGB}{255,50,50}         
\definecolor{myblack}{RGB}{0,0,0}           
\definecolor{myblue}{RGB}{0,0,210}
\newcommand{\Rn}{\mathbb{R}^n}
\newcommand{\R}{\mathbb{R}}                      
\newtheorem{theorem}{Theorem}[section]
\newtheorem{definition}[theorem]{Definition}
\newtheorem{proposition}[theorem]{Proposition}
\newtheorem{assumption}[theorem]{Assumption}
\newtheorem{example}[theorem]{Example}
\numberwithin{equation}{section}
\begin{document}


\title{A strong second-order sequential optimality condition for nonlinear programming problems}
\author{
  Huimin Li$^{*}$
  \and
  Yuya Yamakawa$^{*}$
  \and
  Ellen H. Fukuda$^{*}$
  \and
  Nobuo Yamashita%
  \thanks{Graduate School of Informatics, Kyoto University, 
  Yoshida-Honmachi, Sakyo-ku, Kyoto \mbox{606--8501}, Japan
    (\texttt{hm.li@amp.i.kyoto-u.ac.jp}, \texttt{yuya@i.kyoto-u.ac.jp}, \texttt{ellen@i.kyoto-u.ac.jp}, \texttt{nobuo@i.kyoto-u.ac.jp}).}
}

\maketitle


\begin{abstract}
    \noindent Most numerical methods developed for solving nonlinear programming problems are designed to find points that satisfy certain optimality conditions. While the Karush-Kuhn-Tucker conditions are well-known, they become invalid when constraint qualifications (CQ) are not met. Recent advances in sequential optimality conditions address this limitation in both first- and second-order cases, providing genuine optimality guarantees at local optima, even when CQs do not hold. However, some second-order sequential optimality conditions still require some restrictive conditions on constraints in the recent literature.
    In this paper, we propose a new strong second-order sequential optimality condition without CQs. We also show that a penalty-type method and an augmented Lagrangian method generate points satisfying these new optimality conditions. \\
    
    \noindent \textbf{Keywords:} Second-order sequential optimality conditions, constraint qualifications, penalty-type methods, augmented Lagrangian methods.
\end{abstract}


\section{Introduction}
The nonlinear programming \eqref{NLP} problem plays a crucial role in the optimization theory. With the advancement of computational power, many researchers have developed numerical methods to address these NLPs. 
In particular, the primal-dual interior point method, the augmented Lagrangian method, and the sequential quadratic programming (SQP) method are well-known as numerical methods for NLPs, which are extensively studied and widely applied in NLPs.

Most optimization methods are designed to find points that satisfy the optimality conditions of the underlying problems. Optimality conditions are generally divided into two categories: sufficient conditions and necessary conditions. The set of points satisfying the sufficient condition is a subset of the optimal solution set, while the set of points satisfying the necessary condition always includes the optimal solution set. Although sufficient conditions provide the tightest set of points, they typically require strong assumptions, making them impractical for applications. 
In this paper, we focus on necessary conditions, which yield a larger set of points that includes the optimal solution set.

We consider two desirable properties that an ideal necessary condition should fulfill.
The first property is \textit{genuineness}, meaning that as a necessary condition, it does not require any additional assumptions, enhancing its applicability.
The second property is \textit{tightness}. 
A stronger necessary condition provides a tighter set of points, and the strongest necessary condition provides the tightest set of points, which is the exact optimal solution set. 
It is easy to see that an ideal necessary condition should satisfy both of these properties.

The well-known Karush-Kuhn-Tucker (KKT) conditions, for example, are well-established first-order necessary conditions, which hold under certain constraint qualifications (CQs) such as the Mangasarian-Fromovitz constraint qualification (MFCQ), Robinson's constraint qualification, and the constant rank constraint qualification (CRCQ), etc. Moreover, in convex optimization problems, the KKT conditions serve as also sufficient conditions for optimality.
If no CQ is satisfied, however, KKT conditions may fail to be necessary conditions. That is, there may exist local optima that do not satisfy the KKT conditions.

In the early 2010s, Andreani, Haeser and Mart\'inez~\cite{andreani2011sequential} introduced a new concept of necessary conditions for NLPs, known as Approximate KKT (AKKT) conditions, which do not rely on any CQs. The AKKT conditions, formulated using sequences, are sequential optimality conditions. These conditions are satisfied at local optima, regardless of whether any CQ holds, which makes AKKT genuine necessary conditions.
Furthermore, under certain CQs, they are equivalent to the KKT conditions, which shows that AKKT is at least not weak than KKT. The AKKT has also been extended to more general cases such as second-order cone programming (SOCP)~\cite{andreani2024optimality}, semidefinite programming (SDP)~\cite{andreani2020optimality}, mathematical programs with complementarity constraints (MPCC)~\cite{andreani2019new}, and mathematical programs with equilibrium constraints (MPEC)~\cite{ramos2021mathematical}.

More recently, second-order sequential optimality conditions have been proposed for NLPs, such as the second-order AKKT (AKKT2) ~\cite{andreani2017second}, which has also been extended to SOCP problems \cite{fukuda2025second},
and Strong-AKKT2 (SAKKT2) relative to a perturbed critical subspace $\tilde C$ ($\tilde C$-SAKKT2)~\cite{fischer2023achieving}. The AKKT2-type conditions require not only that the AKKT conditions are satisfied, but also incorporate the second-order information of the optimization problem, making them stronger than AKKT.
Furthermore, some algorithms that find points satisfying such sequential optimality conditions have also been proposed~\cite{andreani2012two,andreani2017second,andreani2010new,okabe2023revised,yamakawa2022stabilized}.
Although both AKKT2 and $\tilde C$-SAKKT2 have been established as necessary conditions for local minimizers in NLPs, 
we need to recall the key distinction between them. Broadly speaking, $\tilde C$-SAKKT2 is stronger than AKKT2 while as necessary conditions, $\tilde C$-SAKKT2 requires a relaxed constant rank assumption, which is weaker than the CRCQ, and AKKT2 does not. Figure \ref{fig: comparison} provides the comparison between AKKT2 and $\tilde C$-SAKKT2, where the notation $A \xlongrightarrow{C} B$ indicates that condition $B$ is a strengthening of condition $A$ with respect to property $C$. The same interpretation applies to Figure 2.

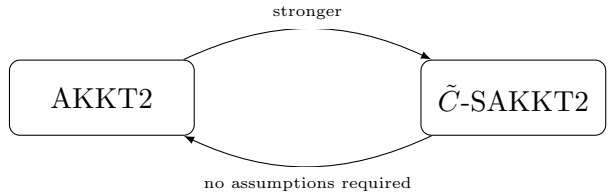
\begin{figure}[htbp]
    \centering
    \begin{tikzpicture}[node distance=3cm]
        \tikzset{
          line/.style={draw, -latex}, 
          box/.style={rectangle, rounded corners, minimum width=2.2cm, minimum height=1cm, text centered, draw=black, text width=2.2cm}
        }

        \node (akkt2) [box] {AKKT2};
        \node (sakkt2) [box, right=of akkt2] {$\tilde C$-SAKKT2};

        \draw[line, bend left=25] (akkt2) to node[fill=white, above] {\tiny stronger} (sakkt2);
        \draw[line, bend left=25] (sakkt2) to node[fill=white, below] {\tiny no assumptions required} (akkt2);
    \end{tikzpicture}
    \caption{Distinctions between AKKT2 and $\tilde C$-SAKKT2}
    \label{fig: comparison}
\end{figure}

Our study contributes by introducing a novel SAKKT2, which fulfills the two desirable properties of the ideal necessary condition. We show that the 
new proposed SAKKT2 is stronger than both AKKT2 and $\tilde C$-SAKKT2, and demonstrate that our proposed SAKKT2 holds at local minima without the need for any extra assumption, as compared to $\tilde C$-SAKKT2~\cite{fischer2023achieving}. The key here is to modify the standard regularized quadratic penalty problem typically used in such proofs. In addition, we propose algorithms that converge to second-order points and generate sequences whose limit points satisfy the SAKKT2 conditions. We show that both the penalty and the augmented Lagrangian methods generate SAKKT2 sequences under reasonable assumptions. In particular, for the penalty method, these assumptions can be replaced with more commonly used assumptions of trust region methods.

This paper is structured as follows. In Section 2, we review the existing optimality conditions for NLP, especially, we compare the existing AKKT2-type conditions, AKKT2 and $\tilde C$-SAKKT2. In Section 3, we propose the new SAKKT2, and show it strictly improves both AKKT2 and $\tilde C$-SAKKT2 conditions. In Section 4, we show that the SAKKT2 points could be generated by the penalty method and the augmented Lagrangian method under some assumptions. Finally, Section 5 concludes the paper.

\section{Existing optimality conditions} 

Before introducing the standard nonlinear programming \eqref{NLP} problems and discussing the related optimality conditions, let us first define the notations that will be used throughout this work. We consider the standard inner product in $\mathbb{R}^n$, given by $\langle a, b\rangle \coloneqq \sum_{i=1}^n a_i b_i$, and the Euclidean norm, given by $\|a\| \coloneqq \sqrt{\langle a, a\rangle}$, for every $a, b \in \mathbb{R}^n$. Let $\mathbb{R}_+$ denote the set of positive scalars. For a matrix $A\in \mathbb{R}^{n \times n}$, its transpose is denoted by $A^\top$. The identity matrix is denoted by $\mathbb{I}$, where its dimension is defined by the context. The gradient and the Hessian of a function $\psi: \mathbb{R}^n \rightarrow \mathbb{R}$ at an arbitrary point $x \in \mathbb{R}^n$ are represented by $\nabla \psi(x)$ and $\nabla^2 \psi(x)$, respectively. When $\tilde{\psi} \colon \mathbb{R}^n \times \mathbb{R}^{\ell} \rightarrow \mathbb{R}$, the gradient and the Hessian of $\tilde{\psi}$ at $(x, y) \in \mathbb{R}^n \times \mathbb{R}^{\ell}$ with respect to~$x$ are given by $\nabla_x \tilde{\psi}(x,y)$ and $\nabla^2_{xx} \tilde{\psi}(x,y)$, respectively.

In this paper, we deal with the following nonlinear programming problem:
\begin{equation}
    \begin{aligned}
        & \underset{x \in \mathbb{R}^n}{\text{minimize}}
        & & f(x) \\
        & \text{subject to}
        & & h_j(x) = 0 , \quad j = 1, \dots, p, \\
        & & & g_i(x) \leq 0, \quad i = 1, \dots, m,
      \end{aligned}
      \tag{NLP}
      \label{NLP}
\end{equation}
where the functions $f, h_j, g_i: \mathbb{R}^n \to \mathbb{R}$ are twice continuously differentiable on $\mathbb{R}^n$. We also use the notations $h := (h_1,\dots,h_p)$ and $g := (g_1,\dots,g_m)$. The feasible set of \eqref{NLP} is denoted by $\mathcal{F}$.

Moreover, for any $x \in \mathbb{R}^n$, we denote by $A(x)$ the set of indices of the inequality constraints that are active at $x$, i.e., 
$$
A(x):=\left\{i \in\{1, \ldots, m\} \mid g_i(x)=0\right\},
$$
and by $A(x)^C$ the complement of $A(x)$, i.e., $A(x)^C\coloneqq \{1, \dots, m\} \backslash A(x).$
The Lagrangian function $L: \Rn \times \mathbb{R}^p \times \mathbb{R}^m_+ \to \mathbb{R}$ for \eqref{NLP} is given by
$$
L(x, \mu, \omega)\coloneqq f(x)+h(x)^{\top} \mu+g(x)^{\top} \omega,
$$
where $\mu \in \mathbb{R}^p$ and $\omega \in \mathbb{R}^m_+$ are the Lagrange multipliers associated with the equality and inequality constraints, respectively. A well-known necessary optimality condition for~\eqref{NLP} is given below.
\begin{definition}
    A point $\bar x \in \mathbb{R}^n$ satisfies the Karush-Kuhn-Tucker (KKT) conditions for \eqref{NLP} if there exist $\bar \mu \in \mathbb{R}^p$ and $\bar \omega \in \mathbb{R}_+^m$ such that
\begin{equation*}
    \begin{gathered}
        \nabla_x L(\bar x, \bar \mu, \bar \omega) = \nabla f(\bar x) + \sum_{j=1}^p \bar \mu_j \nabla h_j(\bar x) + \sum_{i=1}^m \bar \omega_i \nabla g_i(\bar x) = 0,\\
        h_j (\bar x) = 0, \quad j = 1, \dots, p,\\
        \quad g_i(\bar x) \leq 0, \quad \bar \omega_i g_i(\bar x) = 0, \quad i = 1, \dots, m.
    \end{gathered}
\end{equation*}  
\end{definition}

If $(x, \mu, \omega)$ satisfies the KKT conditions, $x$ is called a KKT point. 
Although KKT conditions are often used as optimality conditions, they may not be satisfied at a local minimizer if no constraint qualification (CQ) holds at that point. One example of CQ is the constant rank constraint qualification (CRCQ) for \eqref{NLP}. To deal with the case where no CQ holds, the Approximate KKT (AKKT) conditions \cite{andreani2011sequential} have been proposed.

\begin{definition}
    A point $\bar{x}$ satisfies the approximate KKT (AKKT) conditions for \eqref{NLP} if there exists a sequence $\{(x^k, \mu^k, \omega^k, \varepsilon_k) \} \subseteq \mathbb{R}^n \times \mathbb{R}^p \times \mathbb{R}_{+}^m \times(0, \infty)$ with $x^k \rightarrow \bar{x}$ and $\varepsilon_k \searrow 0$ such that

    \begin{align}
\left\|\nabla f(x^k)+\sum_{j=1}^p \mu_j^k \nabla h_j(x^k)+\sum_{i=1}^m \omega_i^k \nabla g_i(x^k)\right\|  \leq \varepsilon_k, \label{AKKT_LagrangianVanish}\\
\|h(x^k)\|  \leq \varepsilon_k, \label{AKKT_FeasibilityOfEquality}\\
\|\max \{0, g(x^k)\}\| \leq \varepsilon_k, \label{AKKT_FeasibilityOfInequality}\\
\|\min \{\omega^k,-g(x^k)\}\|  \leq \varepsilon_k. \label{AKKT_Complementarity}
\end{align}
\end{definition}

Though the AKKT conditions require the first-order information of \eqref{NLP} as KKT conditions do, it should be noticed that AKKT conditions are sequential conditions, that is, they are defined by a sequence of points. The following proposition shows that, unlike KKT conditions, AKKT is a genuine necessary condition for \eqref{NLP} without requiring any assumptions.

\begin{proposition}
    {\cite[Theorem 2.1]{andreani2011sequential}}
If $\bar x$ is a local minimizer of \eqref{NLP}, then $\bar x$ satisfies the AKKT conditions.  
\end{proposition}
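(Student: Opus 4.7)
The plan is to use a standard quadratic external penalty construction localized to a neighborhood where $\bar x$ is known to be optimal, then read the AKKT multipliers off the stationarity condition of the penalized subproblems.

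First, since $\bar x$ is a local minimizer, pick $\delta>0$ small enough that $\bar x$ is a global minimizer of $f$ over $\mathcal{F}\cap\overline{B}(\bar x,\delta)$. Choose any sequence $\rho_k\to\infty$ and consider the auxiliary subproblems
\begin{equation*}
    \min_{\|x-\bar x\|\le \delta}\; F_k(x)\coloneqq f(x)+\frac{\rho_k}{2}\Bigl(\sum_{j=1}^p h_j(x)^2+\sum_{i=1}^m \max\{0,g_i(x)\}^2\Bigr)+\frac{1}{2}\|x-\bar x\|^2.
\end{equation*}
The compact feasible set and continuity of $F_k$ guarantee the existence of a global minimizer $x^k$. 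The role of the regularization term $\tfrac12\|x-\bar x\|^2$ is to pin any limit point to $\bar x$ itself, which is the essential trick that makes the rest of the argument work.

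Next I would establish $x^k\to\bar x$. From $F_k(x^k)\le F_k(\bar x)=f(\bar x)$ the sequence $\{x^k\}$ is bounded in $\overline{B}(\bar x,\delta)$, so along a subsequence $x^k\to\tilde x$. If $\tilde x$ were infeasible, then one of $h_j(\tilde x)^2$ or $\max\{0,g_i(\tilde x)\}^2$ would be strictly positive and the penalty term in $F_k(x^k)$ would blow up, contradicting $F_k(x^k)\le f(\bar x)$. Hence $\tilde x$ is feasible in $\mathcal{F}\cap\overline{B}(\bar x,\delta)$ and, discarding the nonnegative penalty, $f(\tilde x)+\tfrac12\|\tilde x-\bar x\|^2\le f(\bar x)$; together with the local optimality of $\bar x$, this forces $\tilde x=\bar x$. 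Uniqueness of the limit point gives $x^k\to\bar x$ along the whole sequence.

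For $k$ large, $x^k$ lies in the interior of $\overline{B}(\bar x,\delta)$, so $\nabla F_k(x^k)=0$. Setting $\mu_j^k\coloneqq\rho_k h_j(x^k)$ and $\omega_i^k\coloneqq\rho_k\max\{0,g_i(x^k)\}\ge 0$, this stationarity rewrites as
\begin{equation*}
    \nabla f(x^k)+\sum_{j=1}^p\mu_j^k\nabla h_j(x^k)+\sum_{i=1}^m\omega_i^k\nabla g_i(x^k)+(x^k-\bar x)=0,
\end{equation*}
so \eqref{AKKT_LagrangianVanish} holds with $\varepsilon_k\coloneqq\|x^k-\bar x\|$. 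Continuity and $x^k\to\bar x\in\mathcal{F}$ immediately give \eqref{AKKT_FeasibilityOfEquality} and \eqref{AKKT_FeasibilityOfInequality}. For \eqref{AKKT_Complementarity}, split indices: if $g_i(\bar x)<0$ then $g_i(x^k)<0$ eventually, forcing $\omega_i^k=0$; if $g_i(\bar x)=0$ then $-g_i(x^k)\to 0$, so in either case $\min\{\omega_i^k,-g_i(x^k)\}\to 0$. Taking $\varepsilon_k$ as the maximum of $\|x^k-\bar x\|$ and these residuals (replaced by a decreasing majorant if needed), all four AKKT inequalities are satisfied, which completes the argument.

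The only genuinely nontrivial step is the convergence $x^k\to\bar x$; without the regularization term this would fail in general, because the penalized minimizers could drift to another local minimizer with smaller $f$-value on the boundary of $\overline{B}(\bar x,\delta)$. Everything else is bookkeeping from the stationarity of an unconstrained smooth problem.
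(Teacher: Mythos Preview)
Your argument is correct and is precisely the classical regularized quadratic-penalty proof from the cited reference~\cite{andreani2011sequential}; the paper itself does not reprove this proposition but its proof of Theorem~\ref{thm_necessityOfS_SAKKT2} follows the same template (localized penalized subproblem, convergence $x^k\to\bar x$ via the regularizer, multipliers read off from interior stationarity), only with quartic rather than quadratic terms to accommodate the second-order analysis.
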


Unlike the KKT conditions which need a certain CQ, the AKKT conditions do not require any extra assumptions, which makes the AKKT a genuine necessary condition compared with the KKT conditions.

To exploit the second-order information of \eqref{NLP}, basic second-order necessary conditions have been proposed. To characterize the second-order optimality conditions for \eqref{NLP},
for any $x \in \mathcal{F}$, we need the critical cone 
$$C( x):=\left\{\begin{array}{l|l}
d \in \mathbb{R}^n & 
\begin{array}{l}
\nabla f( x)^\top d \leq 0 \\
\nabla h_j( x)^{\top} d=0, \quad\text{for } j=1, \ldots, p \\
\nabla g_i( x)^{\top} d \leq 0, \quad \text{for } i \in A( x) 
\end{array}
\end{array}\right\}.
$$
Additionally, the constant rank constraint qualification (CRCQ) introduced by~\cite{janin1984directional} is required, which is satisfied for $ x \in \mathcal{F}$ if there is a neighborhood of $ x$ such that for any subset $I \subseteq A( x)$, the rank of the family $\{\nabla h_j(y)\}^p_{j = 1} \cup \{\nabla g_i(y)\}_{i \in I}$ is constant for all $y$ in this neighborhood.

\begin{proposition}{\cite[Theorem 3.1]{andreani2010constant}}
    Let $\bar x$ be a local minimizer of \eqref{NLP} that satisfies CRCQ. Then there exist Lagrange multipliers $(\mu, \omega) \in \mathbb{R}^p \times \mathbb{R}_+^m$ such that
    $$d^\top \nabla^2_{xx} L(\bar x, \mu, \omega) d \geq 0 \quad \text{ for all } d \in C(\bar x).$$
\end{proposition}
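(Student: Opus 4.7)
The plan is to prove the proposition by combining a quadratic external penalty technique (which yields approximate second-order information at points converging to $\bar x$) with the constant rank hypothesis (which lets us extract a bounded multiplier sequence and perturb critical directions). Specifically, fix $\delta>0$ small enough that $\bar x$ is a global minimizer of $f$ on $\mathcal F\cap \overline{B(\bar x,\delta)}$, and for a sequence $\rho_k\to\infty$ consider the auxiliary problem
\begin{equation*}
\min_{x\in\overline{B(\bar x,\delta)}}\; P_k(x)\coloneqq f(x)+\tfrac{\rho_k}{2}\|h(x)\|^2+\tfrac{\rho_k}{2}\|\max\{0,g(x)\}\|^2+\tfrac12\|x-\bar x\|^2.
\end{equation*}
A standard argument shows that any sequence $\{x^k\}$ of global minimizers satisfies $x^k\to\bar x$ and $x^k$ lies in the interior of the ball for large $k$, so $x^k$ is an unconstrained local minimizer of $P_k$.

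Next I would apply the classical unconstrained second-order necessary conditions at $x^k$. Defining the natural penalty multipliers $\mu^k_j\coloneqq \rho_k h_j(x^k)$ and $\omega^k_i\coloneqq \rho_k\max\{0,g_i(x^k)\}\ge 0$, the condition $\nabla P_k(x^k)=0$ gives $\nabla_x L(x^k,\mu^k,\omega^k)=-(x^k-\bar x)\to 0$, and $\nabla^2 P_k(x^k)\succeq 0$ expands to
\begin{equation*}
\nabla^2_{xx} L(x^k,\mu^k,\omega^k)+\rho_k\sum_{j}\nabla h_j(x^k)\nabla h_j(x^k)^\top+\rho_k\!\!\sum_{i:g_i(x^k)>0}\!\!\nabla g_i(x^k)\nabla g_i(x^k)^\top+\mathbb{I}\;\succeq\; 0.
\end{equation*}
Also, for $k$ large, the indices $i$ with $g_i(x^k)>0$ are contained in $A(\bar x)$ by continuity, so only active constraints at $\bar x$ contribute asymptotically.

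The heart of the proof, and the step where CRCQ enters, is twofold. First, I would use CRCQ to replace $(\mu^k,\omega^k)$ by a bounded sequence $(\tilde\mu^k,\tilde\omega^k)$ with the same linear combination of constraint gradients and the same support as $\omega^k$ (this is the standard ``redundant multipliers'' lemma coming from Carathéodory-style reductions together with the local constant rank of $\{\nabla h_j\}\cup\{\nabla g_i\}_{i\in A(\bar x)}$ near $\bar x$); passing to a subsequence we may assume $(\tilde\mu^k,\tilde\omega^k)\to(\mu,\omega)$ with $\omega\ge 0$ and $\omega_i=0$ for $i\notin A(\bar x)$. Second, for any fixed $d\in C(\bar x)$ I would construct, again using CRCQ, a sequence $d^k\to d$ such that $\nabla h_j(x^k)^\top d^k=0$ for all $j$ and $\nabla g_i(x^k)^\top d^k=0$ for all $i\in A(\bar x)$ with $g_i(x^k)>0$ (and, by adjusting in the null space of the active gradients at $\bar x$, for all $i$ in the face of $A(\bar x)$ selected by the support of the multiplier). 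The constant rank hypothesis lets us solve the corresponding underdetermined linear systems near $\bar x$ with a continuous choice of $d^k$.

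Applying $(d^k)^\top[\,\cdot\,]d^k\ge 0$ to the Hessian inequality above, the penalty terms vanish because of the orthogonality built into $d^k$, leaving
\begin{equation*}
(d^k)^\top \nabla^2_{xx} L(x^k,\tilde\mu^k,\tilde\omega^k)d^k+\|d^k\|^2\ge 0.
\end{equation*}
Passing to the limit yields $d^\top\nabla^2_{xx}L(\bar x,\mu,\omega)d\ge -\|d\|^2\cdot 0$; more precisely, replacing $d$ by $td$ for arbitrary $t>0$ and dividing by $t^2$ eliminates the quadratic penalty term $\|d\|^2$ arising from the regularizer (one may alternatively shrink the regularizer weight to zero). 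The main obstacle, and where the CRCQ is indispensable, is precisely the joint construction of bounded replacement multipliers and feasible perturbed directions $d^k$: without constant rank, neither the multiplier sequence can be tamed nor can the critical direction $d\in C(\bar x)$ be lifted to tangent directions of the active constraints at the nearby points $x^k$, and the limiting argument collapses.
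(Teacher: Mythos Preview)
The paper does not prove this proposition; it is quoted from \cite{andreani2010constant} as background, so there is no in-paper proof to compare against and your proposal must stand on its own.

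Your overall architecture --- penalize, extract approximate second-order information at $x^k\to\bar x$, then pass to the limit along perturbed critical directions --- is indeed the strategy of \cite{andreani2010constant}. Two steps, however, do not go through as written.

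First, the scaling trick ``replace $d$ by $td$ and divide by $t^2$'' accomplishes nothing: both $(d^k)^\top\nabla^2_{xx}L\,d^k$ and $\|d^k\|^2$ are homogeneous of degree two in $d^k$, so you recover exactly the same inequality $d^\top\nabla^2_{xx}L(\bar x,\mu,\omega)d\ge -\|d\|^2$, not $\ge 0$. Your parenthetical alternative (shrink the regularizer weight) is the actual fix; equivalently, use a quartic regularizer $\tfrac14\|x-\bar x\|^4$ as in the proof of Theorem~\ref{thm_necessityOfS_SAKKT2}, whose Hessian contribution $\|x^k-\bar x\|^2\mathbb{I}+2(x^k-\bar x)(x^k-\bar x)^\top$ vanishes in the limit. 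This is not a throwaway remark --- it is essential, and the proof must be organized around it from the start.

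Second, and more seriously, replacing $(\mu^k,\omega^k)$ by bounded $(\tilde\mu^k,\tilde\omega^k)$ ``with the same linear combination of constraint gradients'' preserves $\nabla_x L$ but \emph{not} $\nabla^2_{xx}L$. The Hessian $\nabla^2_{xx}L(x^k,\cdot,\cdot)=\nabla^2 f(x^k)+\sum_j\mu_j\nabla^2 h_j(x^k)+\sum_i\omega_i\nabla^2 g_i(x^k)$ depends on the multiplier \emph{values}, not merely on the span of the gradients, so your displayed inequality with $\tilde\mu^k,\tilde\omega^k$ does not follow from $\nabla^2 P_k(x^k)\succeq 0$. A concrete obstruction: with $h_1(x)=x_1$ and $h_2(x)=x_1+x_1^2$ near the origin, CRCQ holds and the gradients are parallel, yet shifting weight between $h_1$ and $h_2$ changes the Lagrangian Hessian. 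In \cite{andreani2010constant} this is handled differently: one keeps the \emph{original} penalty multipliers in the second-order inequality, passes to a subsequence on which the index set $\{i:\omega_i^k>0\}$ is fixed, and uses CRCQ to build $d^k\to d$ orthogonal at $x^k$ to precisely those gradients; the boundedness issue is then resolved by a separate argument tied to that fixed index set, not by a Carath\'eodory swap inside the Hessian. Your outline conflates the first- and second-order roles of the multiplier reduction, and as stated the limiting step does not deliver the conclusion.
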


The corresponding definition of the second-order sequential optimality conditions was firstly proposed in~\cite{andreani2017second}, and in~\cite{haeser2018some}, an equivalent characterization was given, which we include below. Note that in this case a perturbed critical subspace is used. 
\begin{definition}
A point $\bar{x}$ satisfies the second-order AKKT (AKKT2) conditions for \eqref{NLP} if there exists a sequence $\{(x^k, \mu^k, \omega^k, \varepsilon_k) \} \subseteq \mathbb{R}^n \times \mathbb{R}^p \times \mathbb{R}_{+}^m \times(0, \infty)$ with $x^k \rightarrow \bar{x}$ and $\varepsilon_k \searrow 0$ satisfying AKKT and 
      $$d^\top \nabla^2_{xx} L( x^k, \mu^k, \omega^k) d \geq -\varepsilon_k \|d\|^2\quad\text{ for all } d \in S(x^k, \bar x),$$
      where the perturbed critical space $S$ is defined as 
      $$ S(y, x):=\left\{\begin{array}{l|l}
d \in \mathbb{R}^n & 
\begin{array}{l}
\nabla h_j(y)^{\top} d=0, \quad \text{for } j=1, \ldots, p \\
\nabla g_i( y)^{\top} d = 0, \quad \text{for } i \in A( x) 
\end{array}
\end{array}\right\}.
$$
\end{definition}

The necessity of AKKT2 for a local minimizer of \eqref{NLP} has been proved in~\cite{andreani2017second}, which could also be found in~\cite{haeser2018some}. We include it here for later comparisons with other AKKT2-type conditions.

\begin{proposition}{\cite[Theorem 3.4]{andreani2007second}}
    If $\bar x$ is a local minimizer of \eqref{NLP}, then it satisfies AKKT2.
    \label{prop_necessityOfAKKT2}
\end{proposition}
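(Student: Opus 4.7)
The plan is to construct a regularized quadratic penalty subproblem whose global minimizers supply the AKKT2 sequence. Choose $\delta>0$ so that $\bar x$ is a global minimizer of $f$ on $\mathcal{F}\cap \bar B(\bar x,\delta)$, fix $\rho_k\uparrow \infty$, and for each $k$ set
\[
\Phi_k(x) := f(x) + \frac{\rho_k}{2}\sum_{j=1}^{p} h_j(x)^2 + \frac{\rho_k}{2}\sum_{i=1}^{m}\max\{0,g_i(x)\}^2 + \frac{1}{4}\|x-\bar x\|^4.
\]
Let $x^k$ minimize $\Phi_k$ on $\bar B(\bar x,\delta)$. From $\Phi_k(x^k)\leq \Phi_k(\bar x)=f(\bar x)$ one extracts $h(x^k)\to 0$ and $\max\{0,g(x^k)\}\to 0$, and every accumulation point is feasible with value $f(\bar x)$; the quartic term then pins that point to be $\bar x$, so $x^k\to \bar x$. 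For large $k$, $x^k$ lies strictly inside the ball and is therefore an unconstrained local minimizer of $\Phi_k$.

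Define $\mu_j^k:=\rho_k h_j(x^k)$ and $\omega_i^k:=\rho_k\max\{0,g_i(x^k)\}\ge 0$. The stationarity $\nabla\Phi_k(x^k)=0$ becomes
\[
\nabla f(x^k)+\sum_{j=1}^p \mu_j^k\nabla h_j(x^k)+\sum_{i=1}^m\omega_i^k\nabla g_i(x^k) = -\|x^k-\bar x\|^2(x^k-\bar x),
\]
whose right-hand side tends to zero. Combined with the vanishing feasibility residuals and the fact that $\omega_i^k=0$ eventually whenever $g_i(\bar x)<0$ (so the complementarity residual $\|\min\{\omega^k,-g(x^k)\}\|$ also goes to zero), this yields the AKKT inequalities with a sequence $\varepsilon_k\searrow 0$.

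For the second-order part, the Hessian inequality $\nabla^2\Phi_k(x^k)\succeq 0$ (taking the upper Hessian at any index where $g_i(x^k)=0$, which only strengthens the argument) expands, using the above multipliers, into
\[
\nabla^2_{xx}L(x^k,\mu^k,\omega^k)+\rho_k\sum_{j=1}^p\nabla h_j(x^k)\nabla h_j(x^k)^\top+\rho_k\sum_{i\in A^+(x^k)}\nabla g_i(x^k)\nabla g_i(x^k)^\top+R_k\succeq 0,
\]
where $A^+(x^k):=\{i:g_i(x^k)>0\}$ and $R_k:=\|x^k-\bar x\|^2\mathbb{I}+2(x^k-\bar x)(x^k-\bar x)^\top$ satisfies $\|R_k\|\leq 3\|x^k-\bar x\|^2$. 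The decisive observation is that $A^+(x^k)\subseteq A(\bar x)$ for large $k$, since $g_i(\bar x)<0$ forces $g_i(x^k)<0$ by continuity. Hence for $d\in S(x^k,\bar x)$ the defining equalities $\nabla h_j(x^k)^\top d=0$ and $\nabla g_i(x^k)^\top d=0$ (for $i\in A(\bar x)\supseteq A^+(x^k)$) annihilate the two rank-one penalty sums, leaving
\[
d^\top \nabla^2_{xx}L(x^k,\mu^k,\omega^k)\,d \;\geq\; -d^\top R_k d \;\geq\; -\varepsilon_k\|d\|^2,
\]
with $\varepsilon_k:=3\|x^k-\bar x\|^2\to 0$, which is the AKKT2 inequality (after enlarging $\varepsilon_k$ to also dominate the first-order residuals).

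The main technical hurdle is ensuring that the unbounded $\rho_k$-weighted Hessian terms actually cancel on the perturbed subspace $S(x^k,\bar x)$; this is precisely what the inclusion $A^+(x^k)\subseteq A(\bar x)$ and the definition of $S$ (which imposes equalities for the active set at $\bar x$, not at $x^k$) combine to deliver. A secondary design point is the choice of regularizer: the quartic $\tfrac{1}{4}\|x-\bar x\|^4$ has a Hessian that vanishes at $\bar x$, which is what produces $\varepsilon_k\to 0$ in the second-order estimate; a quadratic regularization would leave a constant identity residual and would not suffice.
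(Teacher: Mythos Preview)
Your argument is correct and is exactly the classical penalty-plus-quartic-regularizer construction from the references the paper cites; the paper itself does not reprove this proposition but instead proves the stronger Theorem~\ref{thm_necessityOfS_SAKKT2} ($\tilde S$-SAKKT2), which contains AKKT2 as a special case.

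The one genuine methodological difference is the penalty exponent on the inequality constraints. You use the quadratic term $\tfrac{\rho_k}{2}\sum_i\max\{0,g_i(x)\}^2$, which makes $\Phi_k$ only $C^{1,1}$; the Hessian need not exist at points with $g_i(x^k)=0$, and your parenthetical about ``taking the upper Hessian'' is not quite a proof. The clean justification is directional: for $d\in S(x^k,\bar x)$ and any index $i$ with $g_i(x^k)=0$ (necessarily $i\in A(\bar x)$ for large $k$), you have $\nabla g_i(x^k)^\top d=0$, so $\max\{0,g_i(x^k+td)\}^2=O(t^4)=o(t^2)$ and the nonsmooth part does not contribute to the second-order necessary condition along $d$. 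The paper sidesteps this entirely by using the \emph{quartic} penalty $\tfrac{\rho_k}{4}\sum_i\max\{0,g_i(x)\}^4$, which is $C^2$; then $\nabla^2\Phi_k(x^k)\succeq 0$ is literally valid, at the cost of multipliers $\omega_i^k=\rho_k\max\{0,g_i(x^k)\}^3$ and an extra coefficient $3\max\{0,g_i(x^k)\}^2$ in the rank-one sum. The cancellation of the penalty Hessian terms on $S(x^k,\bar x)$ then proceeds exactly as in your proof. So: same architecture, slightly different penalty that trades a smoothness caveat for a cleaner Hessian computation.
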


It is easy to see that AKKT2 is stronger than AKKT since it does not only require that the AKKT holds, but also includes the second-order information of \eqref{NLP}. Furthermore, as a necessary condition, it improves the basic second-order necessary condition since it does not require any assumptions like CRCQ. We now present another AKKT2-type condition, the strong second-order AKKT with respect to the perturbed critical cone $\tilde C$ (referred to as $\tilde C$-SAKKT2), proposed in \cite{fischer2023achieving}.

\begin{definition}
     A point $\bar{x}$ satisfies the $\tilde C$-strong second-order AKKT ($\tilde C$-SAKKT2) conditions for \eqref{NLP} if there exists a sequence $\{(x^k, \mu^k, \omega^k, \varepsilon_k) \} \subseteq \mathbb{R}^n \times \mathbb{R}^p \times \mathbb{R}_{+}^m \times(0, \infty)$ with $x^k \rightarrow \bar{x}$ and $\varepsilon_k \searrow 0$ satisfying AKKT and 
      $$d^\top \nabla^2_{xx} L( x^k, \mu^k, \omega^k) d \geq -\varepsilon_k \|d\|^2 \quad \text{ for all } d \in \tilde C(x^k, \bar x, \omega^k),$$      
      where the perturbed critical cone $\tilde C$ is defined as 
$$\tilde C(y, x, \omega):=\left\{\begin{array}{l|l}
d \in \mathbb{R}^n & 
\begin{array}{l}
\nabla h_j(y)^{\top} d=0, \quad \text{ for } j=1, \ldots, p \\
\nabla g_i( y)^{\top} d \leq 0, \quad \text{ for } i \in A( x) \text{ with }  \omega_i = 0 \\
\nabla g_i( y)^{\top} d = 0, \quad \text{ for } i \in A( x) \text{ with }  \omega_i > 0
\end{array}
\end{array}\right\}.$$
\label{SAKKT2_NLP_strict}
\end{definition}

We also note that $ \tilde C(\bar x, \bar x, \bar \omega) = C (\bar x)$ when $(\bar x, \bar \mu, \bar \omega)$ is a KKT triple of \eqref{NLP}. Though $\tilde C$-SAKKT2 is obviously stronger than AKKT2, we include the proof here for completeness.

\begin{theorem}
    If a point $\bar x$ satisfies $\tilde C$-SAKKT2, then it satisfies $AKKT2$.
    \label{thm_CSAKKT2StrongerThanAKKT2}
\end{theorem}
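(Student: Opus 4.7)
The plan is to use the same sequence $\{(x^k,\mu^k,\omega^k,\varepsilon_k)\}$ that witnesses $\tilde C$-SAKKT2 as the witness for AKKT2. Since both conditions already require the AKKT conditions with the same type of sequence, the first-order part is immediate from the hypothesis; the only real work is to transfer the second-order inequality from the cone $\tilde C(x^k,\bar x,\omega^k)$ to the subspace $S(x^k,\bar x)$.

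To do this, I would prove the key inclusion
\[
S(x^k,\bar x) \;\subseteq\; \tilde C(x^k,\bar x,\omega^k)
\]
for every $k$. This is a direct comparison of the defining constraints: an element $d \in S(x^k,\bar x)$ satisfies $\nabla h_j(x^k)^\top d=0$ for all $j$ and $\nabla g_i(x^k)^\top d=0$ for every $i\in A(\bar x)$. Splitting the index set $A(\bar x)$ into $\{i\in A(\bar x):\omega_i^k=0\}$ and $\{i\in A(\bar x):\omega_i^k>0\}$, the equality $\nabla g_i(x^k)^\top d=0$ trivially implies both $\nabla g_i(x^k)^\top d\leq 0$ in the first case and $\nabla g_i(x^k)^\top d=0$ in the second, so $d$ lies in $\tilde C(x^k,\bar x,\omega^k)$. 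Note that this inclusion holds irrespective of the sign of the multipliers $\omega_i^k$, so no extra care is needed.

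Having established the inclusion, the second-order inequality guaranteed by $\tilde C$-SAKKT2 at $x^k$, namely
\[
d^\top\nabla_{xx}^2 L(x^k,\mu^k,\omega^k)d \;\geq\; -\varepsilon_k\|d\|^2 \quad\text{for all } d\in\tilde C(x^k,\bar x,\omega^k),
\]
automatically restricts to
\[
d^\top\nabla_{xx}^2 L(x^k,\mu^k,\omega^k)d \;\geq\; -\varepsilon_k\|d\|^2 \quad\text{for all } d\in S(x^k,\bar x),
\]
which is exactly the second-order requirement of AKKT2. Combined with the AKKT part (inherited verbatim from the $\tilde C$-SAKKT2 hypothesis), this shows that the same sequence certifies AKKT2 at $\bar x$.

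There is essentially no obstacle here; the argument is a one-line set inclusion followed by monotonicity of ``for all'' quantifiers. The only subtlety worth flagging in the write-up is that $S$ does not depend on $\omega^k$ while $\tilde C$ does, so one must be careful to apply the inclusion with the same index $k$ on both sides; this is automatic because the active set $A(\bar x)$ used to build $S(x^k,\bar x)$ is precisely the one used in $\tilde C(x^k,\bar x,\omega^k)$.
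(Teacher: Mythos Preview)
Your proposal is correct and is essentially identical to the paper's own proof: the paper simply invokes the inclusion $S(x^k,\bar x)\subseteq\tilde C(x^k,\bar x,\omega^k)$ and concludes immediately that the second-order inequality restricts to $S(x^k,\bar x)$. You have merely spelled out the verification of this inclusion in slightly more detail than the paper does.
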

\begin{proof}
    Since $\bar x$ is a $\tilde C$-SAKKT2 point, there exists a sequence $\{(x^k, \mu^k, \omega^k, \varepsilon_k)\} \subseteq \mathbb{R}^n \times \mathbb{R}^p \times \mathbb{R}_{+}^m \times(0, \infty)$ with $x^k \rightarrow \bar{x}$ and $\varepsilon_k \searrow 0$ satisfying AKKT conditions and 
      $$d^\top \nabla^2_{xx} L( x^k, \mu^k, \omega^k) d \geq -\varepsilon_k \|d\|^2 \quad \text{ for all } d \in \tilde C(x^k, \bar x, \omega^k).$$
    Given the relation $S(x^k, \bar x) \subseteq \tilde C(x^k, \bar x, \omega)$, the above second-order condition also holds for all $d \in S(x^k, \bar x)$, and hence $\bar x$ satisfies AKKT2.
\end{proof}

To establish the necessity of $\tilde C$-SAKKT2, the following relaxed constant rank condition is required.

\begin{assumption}
    It is said that a point $ x \in \Rn$ satisfies the relaxed constant rank condition if there is a neighborhood of $ x$ so that, for any subset $I \subseteq A( x)$, the rank of $\{\nabla g_i(y)\}_{i \in I}$ is constant for all $y$ in this neighborhood.
    \label{assumption_NLP}
\end{assumption}

It is important to note that Assumption \ref{assumption_NLP} only imposes restrictions only on the inequality constraint, making it a weaker condition than CRCQ. Based on this assumption, the necessity of $\tilde C$-SAKKT2 is given below.

\begin{proposition}{\cite[Theorem 3.2]{fischer2023achieving}}
    Let $\bar x$ be a local minimizer of \eqref{NLP} and suppose Assumption \ref{assumption_NLP} holds at $\bar x$. Then, $\bar x$ satisfies $\tilde C$-SAKKT2.
    \label{prop_necessityofC_SAKKT2}
\end{proposition}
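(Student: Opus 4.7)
The plan is to use a localized quadratic-penalty-plus-regularization scheme and extract the $\tilde C$-SAKKT2 sequence from the global minimizers of the subproblems. Since $\bar x$ is a local minimizer of \eqref{NLP}, choose $\delta>0$ with $f(\bar x)\leq f(x)$ for every feasible $x\in\overline{B(\bar x,\delta)}$. For a sequence $\rho_k\nearrow\infty$, I minimize
$$
\phi_k(x):=f(x)+\tfrac{\rho_k}{2}\|h(x)\|^2+\tfrac{\rho_k}{2}\|\max\{0,g(x)\}\|^2+\tfrac{1}{2}\|x-\bar x\|^4
$$
over $\overline{B(\bar x,\delta)}$; compactness supplies a global minimizer $x^k$. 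A standard argument using $\phi_k(x^k)\leq\phi_k(\bar x)=f(\bar x)$ forces every cluster point to be feasible, and the quartic regularizer then pins it to $\bar x$, so $x^k\to\bar x$ and $x^k$ lies in the interior of the ball for $k$ large.

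Define the multipliers $\mu^k_j:=\rho_k h_j(x^k)$ and $\omega^k_i:=\rho_k\max\{0,g_i(x^k)\}\geq 0$. The unconstrained optimality $\nabla\phi_k(x^k)=0$ gives
$$
\nabla_x L(x^k,\mu^k,\omega^k)=-2\|x^k-\bar x\|^2(x^k-\bar x)\longrightarrow 0,
$$
and, because $i\notin A(\bar x)$ implies $g_i(x^k)<0$ (hence $\omega^k_i=0$) for all $k$ large, the feasibility and complementarity conditions \eqref{AKKT_LagrangianVanish}--\eqref{AKKT_Complementarity} hold for a suitable scalar sequence going to zero.

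For the second-order step, fix an arbitrary $d\in\tilde C(x^k,\bar x,\omega^k)$. Since $\omega^k_i>0\Leftrightarrow g_i(x^k)>0$ and, for $k$ large, such indices belong to $A(\bar x)$, the definition of $\tilde C$ forces $\nabla h_j(x^k)^\top d=0$ for all $j$ and $\nabla g_i(x^k)^\top d=0$ whenever $g_i(x^k)>0$. Hence the outer-product terms $\rho_k\nabla h_j\nabla h_j^\top$ and $\rho_k\nabla g_i\nabla g_i^\top$ that appear in the one-sided second derivative of $\phi_k$ along $d$ vanish, while the quartic regularizer contributes only $\eta_k\|d\|^2$ with $\eta_k=O(\|x^k-\bar x\|^2)$. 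The delicate case is $i\in A(\bar x)$ with $g_i(x^k)=0$, at which $(\max\{0,g_i\})^2$ is not $C^2$: here Assumption \ref{assumption_NLP} enters, since the locally constant rank of $\{\nabla g_i(y)\}_{i\in J}$ for $J:=\{i\in A(\bar x):g_i(x^k)=0\}$ allows a Lyusternik/implicit-function-style correction of $d$ that remains on $\{g_i=0\}_{i\in J}$, ensuring the one-sided second directional derivative of $\phi_k$ at $x^k$ in direction $d$ reduces to $d^\top\nabla^2_{xx}L(x^k,\mu^k,\omega^k)d+\eta_k\|d\|^2$. The second-order necessary condition for the local minimizer $x^k$ of $\phi_k$ then yields
$$
d^\top\nabla^2_{xx}L(x^k,\mu^k,\omega^k)d\geq-\eta_k\|d\|^2\qquad\text{for every }d\in\tilde C(x^k,\bar x,\omega^k),
$$
and enlarging the first-order residual to dominate $\eta_k$ produces a single $\varepsilon_k\searrow 0$ witnessing $\tilde C$-SAKKT2.

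The main obstacle I anticipate is the careful handling of the nondifferentiability of $\max\{0,g_i(\cdot)\}^2$ at iterates $x^k$ with $g_i(x^k)=0$ for some $i\in A(\bar x)$: this is exactly where the relaxed constant rank assumption must be leveraged, via a correction on the locally constant-rank active subfamily, so that the spurious one-sided second-order contributions are annihilated and only the smooth Lagrangian Hessian plus the controllable $\eta_k$-error survive.
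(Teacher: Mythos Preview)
The paper does not supply its own proof of this proposition; it is quoted from \cite{fischer2023achieving}. What the paper actually proves is the strictly stronger Theorem~\ref{thm_necessityOfS_SAKKT2} ($\tilde S$-SAKKT2 holds at every local minimizer, with no assumption at all), and then observes that, via Theorem~\ref{thm_S_SAKKT2StrongerThanC_SAKKT2}, Assumption~\ref{assumption_NLP} in Proposition~\ref{prop_necessityofC_SAKKT2} can in fact be dropped.

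The essential technical difference between your sketch and the paper's argument is the penalty chosen for the inequality constraints. You penalize with $\tfrac{\rho_k}{2}\max\{0,g_i\}^2$, which is only $C^{1,1}$; the paper uses $\tfrac{\rho_k}{4}\max\{0,g_i\}^4$, which is $C^2$. With the quartic penalty the auxiliary function \eqref{sub} is twice continuously differentiable, so the classical condition $\nabla^2 F_k(x^k)\succeq 0$ applies directly; the outer-product term it produces for constraint $i$ carries the coefficient $3\rho_k\max\{0,g_i(x^k)\}^2$, which vanishes precisely when $\omega^k_i:=\rho_k\max\{0,g_i(x^k)\}^3=0$. Hence for every $d$ in the larger set $\tilde S(x^k,\bar x,\omega^k)$ all outer-product terms disappear, and no constant-rank hypothesis is ever invoked. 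The quartic device is exactly what eliminates the nondifferentiability obstacle you flag.

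Regarding your own handling of that obstacle, the step where Assumption~\ref{assumption_NLP} enters is not convincingly spelled out. You propose a ``Lyusternik/implicit-function-style correction of $d$ that remains on $\{g_i=0\}_{i\in J}$'', but the inequality you must establish is for the \emph{given} $d\in\tilde C(x^k,\bar x,\omega^k)$, not for a perturbed direction; and for the indices in your set $J$ one has $\omega^k_i=0$, so membership in $\tilde C$ only yields $\nabla g_i(x^k)^\top d\le 0$, which is not the tangency a Lyusternik-type curve would need. You would have to explain precisely how the constant-rank condition controls the one-sided second directional derivative of $\phi_k$ at $x^k$ along $d$ and why the resulting bound transfers back to the original direction. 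As written, this is a gap rather than a proof.
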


Now we can compare the two existing AKKT2-type conditions. First, note that Theorem~\ref{thm_CSAKKT2StrongerThanAKKT2} implies that $\tilde C$-SAKKT2 is stronger than AKKT2. On the other hand, as necessary conditions, Propositions \ref{prop_necessityOfAKKT2} and \ref{prop_necessityofC_SAKKT2} indicate that AKKT2 does not require any assumptions while $\tilde C$-SAKKT2 does. 
In the next section, we will propose a new AKKT2-type condition, and demonstrate that it is stronger than both AKKT2 and $\tilde C$-SAKKT2, and prove that, as a necessary condition, it does not require any assumptions.


\section{Strong second-order sequential optimality conditions}

In this section, we present our results for \eqref{NLP}, which build upon and improve Proposition~\ref{prop_necessityofC_SAKKT2} from~\cite{fischer2023achieving}.
We first introduce the definition of the new strong AKKT2 condition.

\begin{definition}
         A point $\bar{x}$ satisfies the $\tilde S$-strong second-order AKKT ($\tilde S$-SAKKT2) conditions for \eqref{NLP} if there exists a sequence $\{(x^k, \mu^k, \omega^k, \varepsilon_k) \} \subseteq \mathbb{R}^n \times \mathbb{R}^p \times \mathbb{R}_{+}^m \times(0, \infty)$ with $x^k \rightarrow \bar{x}$ and $\varepsilon_k \searrow 0$, satisfying AKKT, and 
         \begin{equation}
             d^\top \nabla^2_{xx} L( x^k, \mu^k, \omega^k) d \geq -\varepsilon_k \|d\|^2\quad\text{ for all } d \in \tilde S(x^k, \bar x, \omega^k),
             \label{tildeSAKKT2_second}
         \end{equation}
         where the perturbed critical subspace $\tilde S$ is defined as
         $$\tilde S(y, x, \omega):=\left\{\begin{array}{l|l}
d \in \mathbb{R}^n & \begin{array}{l}
\nabla h_j(y)^{\top} d=0, \quad \text { for } j=1, \ldots, p, \\
\nabla g_i(y)^{\top} d = 0, \quad \text { for } i \in A(x) \text{ with } \omega_i > 0
\end{array} 
\end{array}\right\}.$$
\label{def_SAKKT2}
\end{definition}

The following result demonstrates that $\tilde S$-SAKKT2 is stronger than $\tilde C$-SAKKT2, and therefore also stronger than AKKT2.
\begin{theorem}
     If a point $\bar x$ satisfies $\tilde S$-SAKKT2, then it satisfies $\tilde C$-SAKKT2.
    \label{thm_S_SAKKT2StrongerThanC_SAKKT2}
\end{theorem}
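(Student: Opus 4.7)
The plan is to follow exactly the same strategy as in the proof of Theorem~\ref{thm_CSAKKT2StrongerThanAKKT2}, namely, to observe a set inclusion between the two perturbed cones and then transfer the second-order inequality from the larger set to the smaller one. I expect no real obstacle here — it is essentially a bookkeeping argument about which constraints define each set.

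First I would unpack both definitions and compare the defining constraints of $\tilde S(x^k,\bar x,\omega^k)$ and $\tilde C(x^k,\bar x,\omega^k)$. Both require $\nabla h_j(x^k)^\top d=0$ for all $j$, and both require $\nabla g_i(x^k)^\top d=0$ for $i\in A(\bar x)$ with $\omega_i^k>0$. The only difference is that $\tilde C$ imposes the additional inequalities $\nabla g_i(x^k)^\top d\le 0$ for $i\in A(\bar x)$ with $\omega_i^k=0$, while $\tilde S$ has no restriction on those indices. Consequently, every $d\in\tilde C(x^k,\bar x,\omega^k)$ automatically lies in $\tilde S(x^k,\bar x,\omega^k)$, giving the inclusion
\[
\tilde C(x^k,\bar x,\omega^k)\subseteq \tilde S(x^k,\bar x,\omega^k).
\]

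Next, I would invoke the hypothesis: since $\bar x$ satisfies $\tilde S$-SAKKT2, there exists a sequence $\{(x^k,\mu^k,\omega^k,\varepsilon_k)\}\subseteq \mathbb{R}^n\times\mathbb{R}^p\times\mathbb{R}_+^m\times(0,\infty)$ with $x^k\to\bar x$ and $\varepsilon_k\searrow 0$ satisfying the AKKT conditions together with \eqref{tildeSAKKT2_second}. By the set inclusion above, the inequality $d^\top\nabla^2_{xx}L(x^k,\mu^k,\omega^k)d\ge -\varepsilon_k\|d\|^2$ holds, in particular, for every $d\in\tilde C(x^k,\bar x,\omega^k)$. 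Combined with the AKKT conditions already satisfied by the same sequence, this is exactly the content of Definition~\ref{SAKKT2_NLP_strict}, so $\bar x$ satisfies $\tilde C$-SAKKT2, completing the proof. The argument is purely set-theoretic and does not require any further analytic work.
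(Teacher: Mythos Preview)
Your proposal is correct and follows exactly the same approach as the paper: the paper's proof is the single observation that $\tilde C(y,x,\omega)\subseteq \tilde S(y,x,\omega)$, which immediately yields the result. You have simply spelled out this inclusion in more detail.
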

\begin{proof}
    The relation $\tilde C (y, x, \omega) \subseteq \tilde S (y, x, \omega)$ completes the proof.
\end{proof}

It could be verified that $\tilde S$-SAKKT2 is strictly stronger than $\tilde C$-SAKKT2 via the following example.
\begin{example}
Consider the following nonlinear programming problem:
\begin{equation*}
    \begin{aligned}
        & \underset{x \in \mathbb{R}^2}{\mathrm{minimize}}
        & & f(x) = x_1 ^2 - x_2 ^2 \\
        & \mathrm{subject~to}
        & & g_1 (x) = - x_1 + x_2 \leq 0, \\
        & & & g_2(x) = - x_2 \leq 0.
      \end{aligned}
\end{equation*}
Let $\bar x = (0, 0)$ and define the sequences $x^k = (\frac{1}{k}, \frac{1}{k})$, $\omega^k = (0, 0)$, $\varepsilon_k = \frac{4}{k}$.
\end{example}
It is straightforward to verify that the AKKT conditions hold at $\bar x$ with the sequence $(x^k, \omega^k, \varepsilon_k)$.
We first show that $\tilde C$-SAKKT2 holds at $\bar x$. Note that the Hessian of the Lagrangian is 
$$\nabla^2_{xx} L( x^k, \omega^k) = \nabla^2 f(x^k) = 
\begin{bmatrix}
    2 & 0 \\
    0 & -2
\end{bmatrix},
$$
and that 
$$\nabla g_1(x^k) = 
\begin{bmatrix}
    -1 \\
    1
\end{bmatrix}, \quad
\nabla g_2(x^k) = 
\begin{bmatrix}
    0 \\
    -1
\end{bmatrix}.$$
Since both constraints are active at $\bar x$ and $\omega^k = (0, 0)$, the perturbed critical cone is given by 
$\tilde C(x^k, \bar x, \omega^k) = \{d \in \R^2 \mid d_1 \geq d_2 \geq 0 \}$.
One easily verifies that 
$$d^{\top} \nabla^2_{xx} L( x^k, \omega^k) d \geq 0 \geq -\varepsilon_k \|d\|^2 \text{ for all } d \in \tilde C(x^k, \bar x, \omega^k).$$
Thus, $\tilde C$-SAKKT2 holds at $\bar x$ with the chosen sequence. 
On the other hand, the perturbed critical subspace in $\tilde S$-SAKKT2 is $\tilde S (x^k, \bar x, \omega^k) = \R ^2$. By choosing $d = (0, 1)$, it is obvious that $\tilde S$-SAKKT2 fails at $\bar x$ with the chosen $( x^k, \omega^k, \varepsilon_k).$

We establish the necessity of $\tilde S$-SAKKT2 as follows.
\begin{theorem}
     If $\bar x$ is a local minimizer of \eqref{NLP}, then $\bar x$ satisfies $\tilde S$-SAKKT2.
     \label{thm_necessityOfS_SAKKT2}
\end{theorem}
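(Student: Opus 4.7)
The plan is to adapt the classical penalty-method proof of sequential optimality conditions by using a modified penalty whose second derivative exists everywhere and whose Hessian localizes exactly on the active inequalities with positive multipliers. Fix $\delta > 0$ such that $\bar x$ is a global minimizer of $f$ over $\mathcal{F} \cap \overline{B_\delta(\bar x)}$, and let $\rho_k \nearrow \infty$. I would introduce
\[
F_k(x) := f(x) + \frac{\rho_k}{2}\|h(x)\|^2 + \frac{\rho_k}{3}\sum_{i=1}^m \bigl(\max\{0, g_i(x)\}\bigr)^3 + \frac{1}{2}\|x - \bar x\|^4
\]
and take $x^k$ to be any minimizer of $F_k$ over $\overline{B_\delta(\bar x)}$, which exists by compactness. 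Two modifications relative to the standard quadratic-penalty-plus-quadratic-localizer are crucial: the cubic power on $\max\{0, g_i\}$ makes $F_k$ genuinely of class $C^2$, and the quartic localizer contributes a Hessian term of order $\|x^k - \bar x\|^2$, which will allow the eventual $\varepsilon_k$ to tend to zero rather than being pinned to a positive constant.

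The first batch of steps is standard. Applying the usual penalty-convergence argument to $F_k(x^k) \le F_k(\bar x) = f(\bar x)$ and using the quartic localizer to rule out any accumulation point other than $\bar x$, I conclude $x^k \to \bar x$, so eventually $x^k \in \mathrm{int}(B_\delta(\bar x))$ is an unconstrained local minimizer of $F_k$. Setting
\[
\mu_j^k := \rho_k h_j(x^k), \qquad \omega_i^k := \rho_k \bigl(\max\{0, g_i(x^k)\}\bigr)^2 \ge 0,
\]
the identity $\nabla F_k(x^k) = 0$ becomes $\nabla_x L(x^k, \mu^k, \omega^k) = -2\|x^k - \bar x\|^2 (x^k - \bar x)$, whose norm is $O(\|x^k - \bar x\|^3) \to 0$. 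Feasibility and approximate complementarity then follow from $x^k \to \bar x$, $\bar x \in \mathcal F$, and the identity $\omega_i^k > 0 \iff g_i(x^k) > 0$, giving the AKKT part of Definition \ref{def_SAKKT2}.

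For the second-order inequality, the crux is that $F_k$ is now $C^2$, so $\nabla^2 F_k(x^k) \succeq 0$ holds automatically, and
\[
\nabla^2 F_k(x^k) = \nabla^2_{xx} L(x^k, \mu^k, \omega^k) + \rho_k \sum_{j} \nabla h_j \nabla h_j^\top + 2\rho_k \sum_{i} \max\{0, g_i(x^k)\}\, \nabla g_i \nabla g_i^\top + R_k,
\]
where $R_k$ is the Hessian of the quartic localizer and satisfies $d^\top R_k d \le 6\|x^k - \bar x\|^2 \|d\|^2$. The key observation is that the coefficient $\max\{0, g_i(x^k)\}$ of the rank-one term vanishes precisely when $\omega_i^k = 0$; moreover, $\omega_i^k > 0$ forces $g_i(x^k) > 0$ and hence, for large $k$, $i \in A(\bar x)$. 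Consequently, for every $d \in \tilde S(x^k, \bar x, \omega^k)$ all rank-one penalty contributions vanish (the equality ones by $\nabla h_j(x^k)^\top d = 0$ and the inequality ones by $\nabla g_i(x^k)^\top d = 0$ whenever $\omega_i^k > 0$), and \eqref{tildeSAKKT2_second} drops out with $\varepsilon_k := \max\{\text{AKKT residuals},\, 6\|x^k - \bar x\|^2\} \searrow 0$.

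The main obstacle, and the reason the modification of the standard penalty is indispensable, is precisely this interplay between the rank-one Hessian coefficient and the subspace defining $\tilde S$. With the classical quadratic penalty $(\max\{0, g_i\})^2$, the coefficient in front of $\nabla g_i \nabla g_i^\top$ would be the indicator $\mathbf{1}_{g_i > 0}$, which is discontinuous at $g_i = 0$; this both ruins the $C^2$ regularity of $F_k$ and puts a rank-one contribution at every point where the constraint is violated regardless of the size of $\omega_i^k$, which is what forced the relaxed constant rank assumption in \cite{fischer2023achieving}. Switching to the cubic penalty simultaneously restores $C^2$ smoothness and aligns the support of the rank-one coefficient with that of $\omega_i^k$, thereby dispensing with any constraint qualification; the quartic localizer is the companion modification that lets the perturbation parameter tend to zero.
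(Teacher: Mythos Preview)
Your proposal is correct and follows essentially the same route as the paper's proof: a regularized penalty subproblem over a small ball, with a higher-than-quadratic power on the inequality violations and a quartic localizer, so that the penalty is genuinely $C^2$ and the rank-one Hessian coefficients for the inequalities have exactly the same support as the multipliers $\omega_i^k$. The only cosmetic difference is that the paper uses a \emph{quartic} inequality penalty $\tfrac{\rho_k}{4}\max\{0,g_i(x)\}^4$ (yielding $\omega_i^k=\rho_k\max\{0,g_i(x^k)\}^3$) whereas you use a \emph{cubic} one; both choices achieve the same two goals you identify, and your bound $d^\top R_k d\le 6\|x^k-\bar x\|^2\|d\|^2$ plays the role of the paper's $3\|x^k-\bar x\|^2$ estimate obtained via the restriction $\delta\in(0,\tfrac13)$.
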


\begin{proof}
Let $\delta \in (0, \frac{1}{3})$ be chosen such that $f(\bar{x}) \leq f(x)$ holds for all $x \in \mathcal{F} \cap \mathbb{B}(\bar x, \delta)$, where $\mathbb{B}(\bar x, \delta)$ denotes the closed ball with center $\bar x$ and radius $\delta$. Given a sequence $\left\{\rho_k\right\} \subset \mathbb{R}_{+}$ with $\rho_k \rightarrow+\infty$, we consider the following regularized penalty subproblem:
\begin{equation}
	\begin{aligned}
        & \underset{x \in \mathbb{R}^n}{\text{minimize}}
        & & F_k(x)\coloneqq f(x) + \frac{\rho_k}{2}  \sum_{j=1}^p
     h_j(x)^2 + \frac{\rho_k}{4}  \sum_{i=1}^m \operatorname{max}\{0, g_i(x)\}  ^4 + \frac{1}{4}\|x-\bar{x}\|^4  \\
        & \text{subject to}
        & &  x \in \mathbb{B}(\bar x, \delta).   
    \end{aligned}
    \label{sub}
\end{equation}

Let $x^k$ be a global solution of the optimization problem \eqref{sub}, which exists because its feasible set is nonempty and compact and the objective function is continuous. Therefore, for any $k \in \mathbb{N}$, we have
\begin{equation}
    f (x^k )+\frac{1}{4} \|x^k-\bar{x} \|^4 \leq  F_k (x^k ) \leq F_k(\bar{x})=f(\bar{x}).
    \label{leq}
\end{equation}
Moreover, because $\left\|x^k-\bar{x}\right\| \leq \delta$ is valid for all $k \in \mathbb{N}$, there exist $x^* \in \mathbb{B}(\bar x, \delta)$ and an infinite subset $K \subseteq \mathbb{N}$ such that $\lim _{K \ni k \to \infty } x^k=x^*$. 
\begin{sloppypar}
We will now show that $x^ * = \bar x$. By \eqref{leq}, we have 
\begin{equation}
    \frac{1}{2}  \sum_{j=1}^p
     h_j(x^k)^2 + \frac{1}{4}  \sum_{i=1}^m \operatorname{max}\{0, g_i(x^k)\}  ^4 \leq \frac{f(\bar x) - f(x^k) - \frac{1}{4}\|x^k-\bar{x}\|^4}{\rho_k}.
     \label{bounded}
\end{equation}
As $x^k \to x^*$ and $\rho_k \to \infty$, the right-hand side of \eqref{bounded} goes to zero, and hence 
~$\sum_{j=1}^p h_j(x^*)^2 = 0$ and $\sum_{i=1}^m \operatorname{max}\{0, g_i(x^*)\} ^4 = 0$. 
Now we note that $f(\bar{x}) \leq f(x^*)$ because $x^* \in \mathcal{F} \cap \mathbb{B}(\bar x, \delta)$. Then, by using \eqref{leq} and taking $k \to \infty$, we have 
$$f (x^* )+\frac{1}{4} \|x^*-\bar{x} \|^4 \leq  f(\bar{x}) \leq f(x^*).$$ Thus, we also obtain $x^* = \bar x$.
\end{sloppypar}
 
Therefore, as $x^k \to \bar x$, there is an $n_0 > 0$ such that for all $k \geq n_0$, $x^k \in \operatorname{int}(\mathbb{B}(\bar x, \delta))$,
and 
\begin{equation}\label{inactive_constrants}
    g_i(x^k) < 0, \quad i \in A(x^*)^C
\end{equation}
by the continuity of $g_i$, $i = 1, \dots, m$.
Note also that $\delta \in (0, \frac{1}{3})$ implies
\begin{equation}
    \|x^k - \bar x\|^3 \leq 3\|x^k - \bar x\|^2 \leq \|x^k - \bar x\|.
    \label{sufficiently_large_k}
\end{equation}
From now on, let $k$ be an arbitrary integer satisfying $k \geq n_0$. Using Fermat's rule, the gradient of the objective function of \eqref{sub} must vanish at $x^k$:
\begin{align}
    \nabla F_k (x^k) = &\, \nabla f(x^k) +  \sum_{j=1}^p \rho_k h_j(x^k) \nabla h_j(x^k) \nonumber \\
    &  +  \sum_{i=1}^m \rho_k  \operatorname{max}\{0, g_i(x^k)\}^3  \nabla g_i(x^k) 
    + \|x^k -\bar{x}\|^2 (x^k - \bar x) \label{first_order} \\
    = & \, 0. \nonumber
\end{align}   
Furthermore, we have
\begin{align}
    \nabla^2 F_k(x^k) & =\left( \nabla^2 f(x^k) +  \sum_{j=1}^p \rho_k h_j(x^k) \nabla^2 h_j(x^k) + 
    \sum_{i=1}^m \rho_k  \operatorname{max}\{0, g_i(x^k)\}^3 \nabla^2 g_i(x^k) \right) \nonumber \\    
    & + \sum_{j=1}^p \rho_k \nabla h_j(x^k) \nabla h_j(x^k) ^\top + 
    3\sum_{i=1}^m \rho_k  \operatorname{max}\{0, g_i(x^k)\}^2 \nabla g_i(x^k) \nabla g_i(x^k)^\top \label{second_order} \\
    & + 2 (x^k - \bar x) (x^k - \bar x)^\top 
    + \|x^k -\bar{x}\|^2 \mathbb{I} 
    \succeq  0. \nonumber
\end{align}
Let us define  
$$\mu^k_j \coloneqq \rho_k h_j(x^k), \quad \omega^k_i \coloneqq \rho_k  \operatorname{max}\{0, g_i(x^k)\}^3  ,$$
and 
$$\varepsilon_k \coloneqq \operatorname{max}\{\|x^k -\bar{x}\|, \|h(x^k)\|, \|\operatorname{max}\{0, g(x^k)\} \|\}.$$
Note that $\varepsilon_k \searrow 0$ ($k \to \infty$). By the definition of $\omega^k_i$, we have $\operatorname{min}\{\omega^k_i, -g_i(x^k)\} = \omega^k_i = 0$ for $g_i(x^k) \leq 0$ and $\operatorname{min}\{\omega^k_i, -g_i(x^k)\} = -g_i(x^k) < 0$ for $g_i(x^k) > 0$. Thus, 
$$\left\|\min \{\omega^k,-g (x^k )\}\right\| \leq \varepsilon_k.$$ Meanwhile, using \eqref{sufficiently_large_k} and \eqref{first_order}, we have 
$$
    \left\| \nabla f(x^k) +  \sum_{j=1}^p \mu^k_j \nabla h_j(x^k) + \sum_{i=1}^m \omega^k_i \nabla g_i(x^k) \right\| = \|x^k -\bar{x}\|^3 \leq \varepsilon_k,
$$
$$\|h(x^k)\| \leq \varepsilon_k.$$
Therefore, $\{(x^k, \mu^k, \omega^k, \varepsilon_k) \} $ satisfies AKKT. 

Let $d \in \tilde S (x^k, \bar{x}, \omega^k)$.
Notice that $\nabla h_j(x^k)^ \top d = 0$ for $j = 1, \dots, p.$
Inequality \eqref{second_order} and the definitions of $\mu^k$, $\omega^k$ and $\varepsilon_k$ give
\begin{equation}
    \begin{aligned}
    & d^\top  \left( \nabla^2 f(x^k) + \sum_{j=1}^p \mu^k_j \nabla^2 h_j(x^k)  + \sum_{i=1}^m \omega^k_i \nabla^2 g_i(x^k) \right) d  \\
    \geq &  - d^\top  \left(2 (x^k - \bar x) (x^k - \bar x)^\top + \|x^k -\bar{x}\|^2 \mathbb{I}\right) d  \\
    & - d^\top \left(3\sum_{i=1}^m \rho_k  \operatorname{max}\{0, g_i(x^k)\}^2  \nabla g_i(x^k) \nabla g_i(x^k)^\top\right)d.
    \end{aligned}   
    \label{inequality_perturbedremaining}
\end{equation}
Now, note that the last term in \eqref{inequality_perturbedremaining} can be written as
\begin{equation}
   \begin{aligned}
     & -3\sum_{i=1}^m \rho_k  \operatorname{max}\{0, g_i(x^k)\}^2  (\nabla g_i(x^k) ^\top d)^2  \\
     = & -3 \left( \sum_{\substack{i: i \in A(x^*)}}  \operatorname{max}\{0, g_i(x^k)\}^2  (\nabla g_i(x^k) ^\top d)^2  +  \sum_{\substack{i: i \in A(x^*)^C}}  \operatorname{max}\{0, g_i(x^k)\}^2  (\nabla g_i(x^k) ^\top d)^2 \right)  \\
     = & -3  \sum_{\substack{i: i \in A(x^*) \\ \text{ with } \omega^k_i > 0}}  \operatorname{max}\{0, g_i(x^k)\}^2  (\nabla g_i(x^k) ^\top d)^2   
   \end{aligned} 
   \label{inequality_cancled}
\end{equation}
since the definition of $\omega_i ^k$ gives the fact that $\omega_i ^k = 0$ is equivalent to $g_i(x^k) \leq 0$ and hence $\operatorname{max}\{0, g_i(x^k)\}^2 = 0$, and the last term in the second-line disappears from \eqref{inactive_constrants}.
Thus, we can then conclude that \eqref{inequality_cancled} is zero due to $d \in \tilde S (x^k, \bar{x}, \omega^k)$.
Then, we have
\begin{align*}
- d^\top  \nabla^2_x L(x^k, \mu^k, \omega^k) d \geq 
     &  - d^\top  \left(2 (x^k - \bar x) (x^k - \bar x)^\top + \|x^k -\bar{x}\|^2 \mathbb{I}\right) d \nonumber \\
  \geq & -\varepsilon_k \|d\|^2,
\end{align*}
    where the last line comes from Cauchy-Schwarz inequality and the fact that $3\|x^k - \bar x\|^2 \leq \varepsilon_k$ by \eqref{sufficiently_large_k}.
\end{proof}

Although $\tilde C$-SAKKT2 has been shown to be necessary conditions for a local minimizer under the relaxed constant rank condition in~\cite{fischer2023achieving}, we have demonstrated that this relaxed constant rank condition is actually not required, as shown by Theorems \ref{thm_S_SAKKT2StrongerThanC_SAKKT2} and \ref{thm_necessityOfS_SAKKT2}.
More specifically, Theorem \ref{thm_S_SAKKT2StrongerThanC_SAKKT2} establishes that $\tilde S$-SAKKT2 implies $\tilde C$-SAKKT2, and Theorem~\ref{thm_necessityOfS_SAKKT2} confirms that $\tilde S$-SAKKT2 itself is a necessary condition without requiring any assumptions. Thus, Assumption \ref{assumption_NLP} in Proposition \ref{prop_necessityofC_SAKKT2} can be removed without compromising its necessity.
The trick here is the use of quartic penalization in \eqref{sub} for handling inequality constraints. 

We conclude this section by comparing the three AKKT2-type conditions: AKKT2, $\tilde C$-SAKKT2, and our $\tilde S$-SAKKT2. As shown in Theorem \ref{thm_S_SAKKT2StrongerThanC_SAKKT2}, the $\tilde S$-SAKKT2 conditions we proposed in this paper is the strongest necessary condition among the three. Moreover, it does not require any assumptions as the necessary condition. These two desirable properties suggest that our new $\tilde S$-SAKKT2 should be regarded as the most ideal second-order sequential necessary condition of the three. Figure \ref{fig: contributions} shows the improvement of $\tilde S$-SAKKT2, compared with AKKT2 and $\tilde C$-SAKKT2.

\begin{figure}[htbp]
    \centering
    \begin{tikzpicture}[node distance=3.8cm]
        \tikzset{
          line/.style={draw, -latex}, 
          box/.style={rectangle, rounded corners, minimum width=2.2cm, minimum height=1cm, text centered, draw=black, text width=2.2cm}
        }

        \node (akkt2) [box] at (-3, 0) {AKKT2}; 
        \node (sakkt2) [box] at (3, 0) {$\tilde C$-SAKKT2}; 
        \node (result_nlp) [box] at (0, -4.5) {$\tilde S$-SAKKT2}; 

        \draw[line, bend left=25] (akkt2) to node[fill=white, above] {\tiny stronger} (sakkt2);
        \draw[line, bend left=25] (sakkt2) to node[fill=white, below] {\tiny no assumptions required} (akkt2);
        \draw[line] (akkt2) -- (result_nlp) node[pos=0.5, fill=white, align=center, text width=2.2cm] {\tiny stronger};
        \draw[line] (sakkt2) -- (result_nlp) node[pos=0.5, fill=white, align=center, text width=2.2cm] {\tiny stronger\\no assumptions};
    \end{tikzpicture}
    \caption{Comparison of $\tilde S$-SAKKT2 with AKKT2 and $\tilde C$-SAKKT2}
    \label{fig: contributions}
\end{figure}
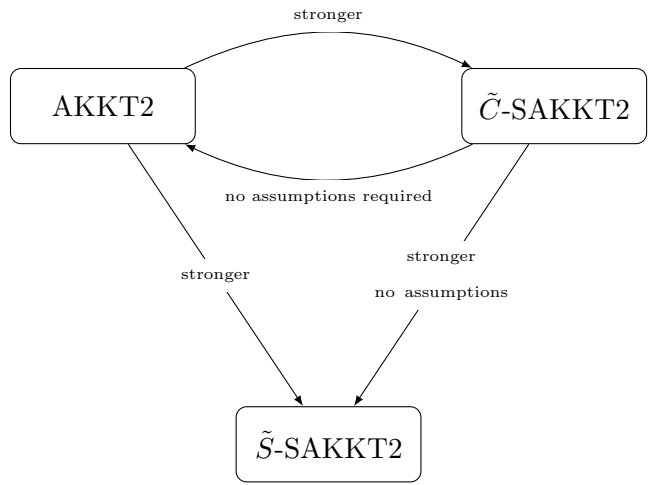

\section{Algorithms that generate $\tilde S$-SAKKT2 points}

In this section, we will present some algorithms that can generate $\tilde S$-SAKKT2 points. We begin by providing the following assumption, which will be needed in the proposed basic penalty method and the augmented Lagrangian method.

\begin{assumption}
A sequence $\{x^k \}$ satisfies the assumption if $$\lim_{k \to \infty} \sum_{j=1}^p h_j(x^k)^2 = 0 \quad \text{ and } \quad \lim_{k \to \infty}\sum_{i=1}^m \operatorname{max}\{0, g_i(x^k)\} = 0.$$
\label{assumption_limitfeasibility}
\end{assumption}

Note that the assumption is general, as it is required by many optimization algorithms. It is particularly satisfied by any algorithm if the accumulation point of the sequence generated by it is feasible.

\subsection{The penalty method}

To introduce the basic penalty method, let $\{\rho_k\}$ be a positive sequence, and consider the following unconstrained subproblem in each iteration~$k$:

\begin{equation}
	\begin{aligned}
        & \underset{x \in \mathbb{R}^n}{\text{minimize}}
        & & \phi_{\rho_k} (x)\coloneqq f(x) + \frac{\rho_k}{2}  \sum_{j=1}^p
 h_j(x)^2 + \frac{\rho_k}{4}  \sum_{i=1}^m \operatorname{max}\{0, g_i(x)\}  ^4.
   \end{aligned}
   \label{penalty_subproblem}
\end{equation}
Observe that $\phi_{\rho_k}$ is similar to the well-known quadratic penalty function, except that the violation of the inequality constraint is raised to the fourth power, rather than squared. A basic penalty method that uses this function and generates $\tilde S$-SAKKT2 points under Assumption \ref{assumption_limitfeasibility} is given in Algorithm~\ref{algm_penalty}.

\begin{algorithm}
\caption{Basic penalty method}
\begin{algorithmic}[1]
    \STATE Choose $\{\varepsilon_k\}, \{\rho_k\} \in (0, \infty)$ such that $\varepsilon_k \searrow 0$ and $\rho_k \to \infty$, and $x^0 \in \Rn$.
    Set $k := 0$.
    \STATE Find an approximate solution $x^k$ of \eqref{assumption_limitfeasibility} satisfying 
    \begin{equation}
        \|\nabla \phi_{\rho_k} (x^k) \| \leq \varepsilon_k \quad \text{ and } \quad d^\top \nabla^2 \phi_{\rho_k}(x^k) d \geq -\varepsilon_k \|d\| ^2 \text{ for all } d \in \Rn.
        \label{penalty_condition}
    \end{equation}
    \STATE Set $k \coloneqq k + 1$ and go back to Step~2.
\end{algorithmic}
\label{algm_penalty}
\end{algorithm}

Note that Step~2 of Algorithm~\ref{algm_penalty} can be performed using a trust region method. The next theorem shows that the sequence generated by Algorithm 1 satisfies $\tilde S$-SAKKT2 under Assumption 4.1.

\begin{theorem}
  \label{theo:basic_penalty}
  Let $\{x^k \}$ be a sequence generated by Algorithm \ref{algm_penalty} and $x^*$ be any of its accumulation points. Suppose $\{x^k \}$ satisfies Assumption \ref{assumption_limitfeasibility}. Then, $x^*$ is a $\tilde S$-SAKKT2 point.
\end{theorem}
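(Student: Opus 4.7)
The plan is to mimic the construction in the proof of Theorem~3.2 (necessity of $\tilde{S}$-SAKKT2): define the multipliers in the standard penalty-function way and then verify AKKT and the second-order inequality on $\tilde{S}(x^k,x^*,\omega^k)$. Concretely, I would pass to a subsequence (still indexed by $k$) with $x^k \to x^*$ and set
\[
\mu_j^k := \rho_k h_j(x^k), \qquad \omega_i^k := \rho_k \max\{0,g_i(x^k)\}^3,
\]
so that $\nabla \phi_{\rho_k}(x^k) = \nabla_x L(x^k,\mu^k,\omega^k)$. Note $\omega_i^k \geq 0$.

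For the AKKT part, I would first note that the first bound in \eqref{penalty_condition} gives $\|\nabla_x L(x^k,\mu^k,\omega^k)\|\leq \varepsilon_k$. Assumption~\ref{assumption_limitfeasibility} directly yields $\|h(x^k)\|\to 0$ and $\|\max\{0,g(x^k)\}\|\to 0$. For the complementarity term $\min\{\omega_i^k,-g_i(x^k)\}$, I would argue case by case: when $g_i(x^k)\leq 0$ the definition forces $\omega_i^k=0$ and the min is $0$; when $g_i(x^k)>0$, the min equals $-g_i(x^k)$, whose norm is bounded by $\|\max\{0,g(x^k)\}\|$. Defining a tolerance $\tilde\varepsilon_k := \max\{\varepsilon_k,\|h(x^k)\|,\|\max\{0,g(x^k)\}\|\}$, all AKKT inequalities hold with $\tilde\varepsilon_k \searrow 0$.

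For the second-order part, I would compute
\[
\nabla^2 \phi_{\rho_k}(x^k) = \nabla^2_{xx} L(x^k,\mu^k,\omega^k) + \rho_k \sum_{j=1}^{p} \nabla h_j(x^k)\nabla h_j(x^k)^\top + 3\rho_k \sum_{i=1}^{m} \max\{0,g_i(x^k)\}^2 \nabla g_i(x^k)\nabla g_i(x^k)^\top,
\]
and fix an arbitrary $d \in \tilde{S}(x^k,x^*,\omega^k)$. The first rank-one sum vanishes when contracted with $d$, because $\nabla h_j(x^k)^\top d = 0$ by the definition of $\tilde{S}$. For the inequality sum I would split the index $i$ according to whether $i \in A(x^*)$. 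For $i\notin A(x^*)$, continuity gives $g_i(x^k)<0$ for $k$ large, so $\max\{0,g_i(x^k)\}=0$. For $i\in A(x^*)$, I split again on the sign of $\omega_i^k$: if $\omega_i^k>0$, then $\nabla g_i(x^k)^\top d=0$ by the definition of $\tilde{S}$, killing the term; if $\omega_i^k=0$, then $\max\{0,g_i(x^k)\}=0$ by the definition of $\omega^k$, also killing the term. Hence $d^\top \nabla^2_{xx} L(x^k,\mu^k,\omega^k)d = d^\top \nabla^2 \phi_{\rho_k}(x^k)d \geq -\varepsilon_k\|d\|^2$ by \eqref{penalty_condition}.

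The main obstacle, though essentially bookkeeping rather than a deep difficulty, is the case analysis that shows the second rank-one correction vanishes on $\tilde{S}$: one must use all three regimes (inactive at $x^*$, active with positive multiplier, active with zero multiplier) and, crucially, exploit the quartic penalization, which ties $\omega_i^k=0$ to $\max\{0,g_i(x^k)\}=0$ (the same mechanism used in the proof of Theorem~\ref{thm_necessityOfS_SAKKT2}). Once this elimination is established, the argument reduces to combining the two inequalities in \eqref{penalty_condition} with Assumption~\ref{assumption_limitfeasibility}, and the conclusion that $x^*$ satisfies $\tilde{S}$-SAKKT2 follows directly from Definition~\ref{def_SAKKT2}.
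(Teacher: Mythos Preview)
Your proposal is correct and follows essentially the same route as the paper's proof: you define the multipliers $\mu_j^k=\rho_k h_j(x^k)$ and $\omega_i^k=\rho_k\max\{0,g_i(x^k)\}^3$, verify AKKT via \eqref{penalty_condition} and Assumption~\ref{assumption_limitfeasibility} with the same case split for complementarity, and then eliminate the rank-one corrections on $\tilde S(x^k,x^*,\omega^k)$ exactly as in the proof of Theorem~\ref{thm_necessityOfS_SAKKT2}. If anything, your write-up of the second-order step is more explicit than the paper's, which simply appeals to that earlier argument; your introduction of the combined tolerance $\tilde\varepsilon_k$ is also a clean way to package the AKKT inequalities.
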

\begin{proof}
    Condition \eqref{penalty_condition} gives 
    \begin{equation}
        \left\| \nabla f(x^k) +  \sum_{j=1}^p \rho_k h_j(x^k) \nabla h_j(x^k) 
  +  \sum_{i=1}^m \rho_k  \operatorname{max}\{0, g_i(x^k)\}^3  \nabla g_i(x^k) \right\| \leq \varepsilon_k
    \label{penalty_LagrangeVanish}
    \end{equation}
and
\begin{align}
    & d ^\top \left( \nabla^2 f(x^k) +  \sum_{j=1}^p \rho_k h_j(x^k) \nabla^2 h_j(x^k) + 
    \sum_{i=1}^m \rho_k  \operatorname{max}\{0, g_i(x^k)\}^3 \nabla^2 g_i(x^k) \right) d \nonumber \\    
    & + \sum_{j=1}^p \rho_k \| \nabla h_j(x^k) ^\top d \|^2 + 
    3\sum_{i=1}^m \rho_k  \operatorname{max}\{0, g_i(x^k)\}^2 \| \nabla g_i(x^k)^\top d\|^2 \geq -\varepsilon_k \|d\|^2
    \label{penalty_condition_second_order}
\end{align}
for all $d \in \Rn$. Then by setting the Lagrange multipliers associated to $x^k$ as 
    $$\mu^k_j \coloneqq \rho_k h_j(x^k), \quad \omega^k_i \coloneqq \rho_k  \operatorname{max}\{0, g_i(x^k)\}^3,$$
and by \eqref{penalty_LagrangeVanish} as well as Assumption \ref{assumption_limitfeasibility}, we have 
\eqref{AKKT_LagrangianVanish}, \eqref{AKKT_FeasibilityOfEquality}, and \eqref{AKKT_FeasibilityOfInequality} are satisfied. By the definition of $\omega^k$, when $g_i(x^k) \leq 0$, we have $\omega_i^k = 0$ and hence $\min \{\omega_i^k, -g_i(x^k)\} = \omega_i^k = 0$. When $g_i(x^k) \geq 0$, on the other hand, we have
\[
\lim_{k\to \infty} \min \{\omega_i^k, -g_i(x^k)\} = -\lim_{k\to \infty} g_i(x^k) = 0
\]
due to Assumption \ref{assumption_limitfeasibility}, so that \eqref{AKKT_Complementarity} is satisfied. Also, similarly to the proof of Theorem~\ref{thm_necessityOfS_SAKKT2}, we can see that by choosing any $d \in \tilde{S}(x^k, \bar{x}, \omega^k) \subset \mathbb{R}^n$, \eqref{penalty_condition_second_order} implies \eqref{tildeSAKKT2_second}.
Since $x^*$ is an accumulation point, there is an infinite subset $K \subseteq \mathbb{N}$ such that $\lim_{K \ni k \to \infty} x^k = x^*$. Then, we have $\{(x^k, \mu^k, \omega^k)\} \subseteq \mathbb{R}^n \times \mathbb{R}^p \times \mathbb{R}_{+}^m$ with $x^k \to x^*$ satisfying $\tilde{S}$-SAKKT2 conditions and hence $x^*$ is a $\tilde{S}$-SAKKT2 point.
\end{proof}

Next we present a modification of Algorithm 1, which does not require  Assumption 4.1. The modified penalty method applies the trust region method from an appropriate initial point. And this approach demonstrates that Assumption \ref{assumption_limitfeasibility} automatically holds for the penalty method given in Algorithm~\ref{penalty_algm2}.

\begin{algorithm}
\caption{Modified penalty method}
\begin{algorithmic}[1]
    \STATE Choose $\{\varepsilon_k\}, \{\rho_k\} \in (0, \infty)$ such that $\varepsilon_k \searrow 0$ and $\rho_k \to \infty$, and a feasible point $x^0 \in \mathcal{F}$.
    Set $\hat x := x^0$ and $k := 0$.
    \STATE Obtain an approximate solution $x^k$ of \eqref{assumption_limitfeasibility} satisfying 
    $$\|\nabla \phi_{\rho_k} (x^k) \| \leq \varepsilon_k \quad \mbox{and} \quad d^\top \nabla^2 \phi_{\rho_k}(x^k) d \geq -\varepsilon_k \|d\|^2 \mbox{ for all } d \in \Rn$$ via a trust region method starting from $\hat x$.
    \IF{$f(x^k) + \frac{\rho_{k+1}}{2}  \sum_{j=1}^p h_j(x^k)^2 + \frac{\rho_{k+1}}{4}  \sum_{i=1}^m \operatorname{max}\{0, g_i(x^k)\}^4 \leq f(x^0)$}
        \STATE set $\hat x := x^k$.
    \ELSE
        \STATE set $\hat x := x^0$.
    \ENDIF
    \STATE Set $k \coloneqq k + 1$ and go back to Step~2.
\end{algorithmic}
\label{penalty_algm2}
\end{algorithm}

The following result demonstrates that the modified penalty method can generate $\tilde S$-SAKKT2 points without the need for Assumption~\ref{assumption_limitfeasibility}.

\begin{theorem}
  Let $\{x^k \}$ be a sequence generated by Algorithm \ref{penalty_algm2} and $x^*$ be any of its accumulation points. Then, we have 
  $$\lim_{k \to \infty} \sum_{j=1}^p h_j(x^k)^2 = 0 \quad \mbox{and} \quad \lim_{k \to \infty}\sum_{i=1}^m \operatorname{max}\{0, g_i(x^k)\} = 0,$$
  i.e., Assumption~\ref{assumption_limitfeasibility} automatically holds. 
  Furthermore, $x^*$ is a $\tilde S$-SAKKT2 point.
\end{theorem}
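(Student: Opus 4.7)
The plan is to exploit the reset mechanism for $\hat{x}$ together with the descent property of trust region methods to establish the uniform bound $\phi_{\rho_k}(x^k) \leq f(x^0)$, and then use $\rho_k \to \infty$ to force the constraint violations to zero, after which Theorem~\ref{theo:basic_penalty} applies directly.

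First, I would prove by induction that at every iteration $k$ the point $\hat{x}$ handed to the trust region subroutine in Step~2 satisfies $\phi_{\rho_k}(\hat{x}) \leq f(x^0)$. The base case is immediate since $\hat{x} = x^0 \in \mathcal F$ gives $\phi_{\rho_0}(x^0) = f(x^0)$. For the induction step, after $x^{k-1}$ has been produced, Step~3 either succeeds and sets $\hat{x} := x^{k-1}$ with the explicit bound $\phi_{\rho_k}(x^{k-1}) \leq f(x^0)$, or resets $\hat{x} := x^0$, for which $\phi_{\rho_k}(x^0) = f(x^0)$ by feasibility. In either case the invariant is preserved. Invoking the standard monotone-descent property of trust region methods applied to $\phi_{\rho_k}$ from the initial point $\hat{x}$, the approximate minimizer $x^k$ then inherits $\phi_{\rho_k}(x^k) \leq \phi_{\rho_k}(\hat{x}) \leq f(x^0)$.

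Next, I would convert this uniform bound into asymptotic feasibility. Rearranging,
$$\frac{\rho_k}{2}\sum_{j=1}^p h_j(x^k)^2 + \frac{\rho_k}{4}\sum_{i=1}^m \max\{0,g_i(x^k)\}^4 \leq f(x^0) - f(x^k).$$
Along the subsequence $\{x^k\}_{k \in K}$ converging to $x^*$, continuity of $f$ keeps the right-hand side bounded, whereas $\rho_k \to \infty$; dividing by $\rho_k$ forces $h_j(x^k)^2 \to 0$ and $\max\{0,g_i(x^k)\}^4 \to 0$ for each $j,i$ along $K$. Taking fourth roots in the latter yields $\sum_i \max\{0,g_i(x^k)\} \to 0$, so Assumption~\ref{assumption_limitfeasibility} holds along the subsequence producing $x^*$. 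Theorem~\ref{theo:basic_penalty} then delivers the $\tilde S$-SAKKT2 conclusion for $x^*$, with the Lagrange multipliers $\mu_j^k := \rho_k h_j(x^k)$ and $\omega_i^k := \rho_k \max\{0,g_i(x^k)\}^3$ inherited from that proof.

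The main obstacle is the inductive maintenance of $\phi_{\rho_k}(\hat{x}) \leq f(x^0)$: the reset to $x^0$ in Step~6 is precisely what guards against a previously good $\hat{x}$ becoming insufficient once the penalty parameter jumps to $\rho_{k+1}$, and this reset has to be combined cleanly with the descent property of the trust region subroutine to propagate the invariant to $x^k$. A secondary concern is that the literal statement of Assumption~\ref{assumption_limitfeasibility} is about the full sequence; promoting the conclusion from the convergent subsequence to the full sequence would additionally require $\{f(x^k)\}$ to be bounded below (e.g.\ via boundedness of $\{x^k\}$ or a coercivity hypothesis on $f$), an implicit assumption I would want to flag.
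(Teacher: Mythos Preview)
Your proposal is correct and follows essentially the same argument as the paper: both use the descent property of the trust region method together with the Step~3/Step~6 dichotomy on $\hat{x}$ to establish $\phi_{\rho_k}(x^k) \leq \phi_{\rho_k}(\hat{x}) \leq f(x^0)$, then divide by $\rho_k$ along the convergent subsequence and invoke Theorem~\ref{theo:basic_penalty}. Your flagged secondary concern about full-sequence versus subsequence limits is apt---the paper's own proof likewise only takes the limit along the subsequence $K$ before concluding.
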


\begin{proof}
Since $x^*$ is an accumulation point of~$\{x^k \}$, there is an infinite subset $K \subseteq \mathbb{N}$ such that $\lim_{K \ni k \to \infty} x^k = x^*$. 
Meanwhile, since the trust region method is a descent method~(see \cite[Chapter 4]{wright2006numerical}), at the iteration $k$ we have
$$\phi_{\rho_k} (x^k) \leq \phi_{\rho_{k}} (\hat x).$$
 Also, from the algorithm, either $\hat{x} = x^0$ or $\hat{x} = x^{k-1}$.
 In the former case, $\phi_{\rho_{k}} (\hat x) = \phi_{\rho_{k}} (x^0) = f(x^0)$ because $x^0 \in \mathcal{F}$. In the latter case, $\phi_{\rho_{k}} (\hat x) = \phi_{\rho_{k}} (x^{k-1}) \le f(x^0)$ from the previous iteration and Step~3. 
 Thus, we obtain
 $$f(x^k) + \frac{\rho_k}{2}  \sum_{j=1}^p
 h_j(x^k)^2 + \frac{\rho_k}{4} \sum_{i=1}^m \operatorname{max}\{0, g_i(x^k)\}  ^4  \leq \phi_{\rho_{k}} (\hat x) \leq f(x^0).$$
 Therefore,
 \begin{equation*}
     \frac{1}{2}  \sum_{j=1}^p
 h_j(x^k)^2 + \frac{1}{4}  \sum_{i=1}^m \operatorname{max}\{0, g_i(x^k)\}  ^4  \leq \frac{f(x^0) - f(x^k)}{\rho_k}.
 \end{equation*}
 Taking the limit $K \ni k \to \infty$, the right-hand side of the above inequality goes to $0$ because $\rho_k \to \infty$ and $f$ is continuous. Hence Assumption~\ref{assumption_limitfeasibility} holds. The remainder of the proof is similar to that of  Theorem~\ref{theo:basic_penalty}.
\end{proof}

Algorithm \ref{penalty_algm2} demonstrates that when using the modified penalty method, Assumption~\ref{assumption_limitfeasibility} can be replaced by other general assumptions commonly applied in the trust region method (see, for example, \cite[Theorem 4.1]{wright2006numerical}), without affecting the global convergence.

\subsection{The augmented Lagrangian method}
Consider the following augmented Lagrangian function:
\begin{equation}
    L_\rho(x, \mu, \omega )\coloneqq f(x) + \frac{\rho}{2}  \sum_{j=1}^p
 \left[h_j(x) + \frac{\mu_j}{\rho} \right]^2 + \frac{\rho}{4}  \sum_{i=1}^m \max\left\{0, g_i(x) + \frac{\omega_i}{\rho}\right\}  ^4 
\end{equation}
for all $x \in \Rn$, $\rho > 0$ and where $\mu \in \mathbb{R}^p$ and $\omega \in \mathbb{R}^m_+$ are the Lagrange multipliers. As in the case of the penalty method, the function above is similar to the classical Powell-Hestenes-Rockafellar augmented Lagrangian function \cite{hestenes1969multiplier, powell1969method, rockafellar1974augmented}, except that the violations of the inequality constraints are raised to the fourth power.

\begin{algorithm}
\caption{Augmented Lagrangian method}
\begin{algorithmic}[1]
    \STATE Let $\mu_{\min }<\mu_{\max }$, $\omega_{\max }>0$, $\gamma>1$, $\rho_1>0$, $\tau \in(0,1)$. Let $\{\varepsilon_k\} \in (0, \infty)$ such that $\varepsilon_k \searrow 0$.
    Let $\mu_j^1 \in [\mu_{\min }, \mu_{\max }]$ for all $ j \in\{1, \ldots, p\}$ and $\omega_i^1 \in[0, \omega_{\max }]$ for all $i \in\{1, \ldots, m\}$. Let $x^0 \in \mathbb{R}^n$, and $V^0\coloneqq \max \{0, g(x^0)\}$. Set $k := 0$.
    
    \STATE Obtain an approximate minimizer $x^k$ of $L_{\rho_k}(x, \mu^k, \omega^k)$ satisfying
    \begin{equation}
        \|\nabla_x L_{\rho_k}(x^k, \mu^k, \omega^k)\| \leq \varepsilon_k \quad \text { and } \quad \nabla^2_{xx} L_{\rho_k}(x^k, \mu^k, \omega^k) \geq  -\varepsilon_k \mathbb{I}.
        \label{augmented_condition}
    \end{equation}
    \STATE Define $V_i^k \coloneqq \max \{g_i(x^k),-\omega_i^k / \rho_k\}$ for $i \in\{1, \ldots, m\}$.
    \IF{$\max \{\|h(x^k)\|_{\infty},\|V^k\|_{\infty}\} \leq \tau \max \{\|h(x^{k-1})\|_{\infty},\|V^{k-1}\|_{\infty}\}$}
        \STATE set $\rho_{k+1}:=\rho_k$,
    \ELSE \STATE set $\rho_{k+1}:=\gamma \rho_k$.
    \ENDIF
    \STATE Compute $\mu_j^{k+1}\coloneqq \max(\mu_{\min}, \min(\mu_{\max}, \mu_j^k + \rho_k h_j(x^k))) \in [\mu_{\min }, \mu_{\max }]$ for all $j \in\{1, \ldots, p\}$ and $\omega_i^{k+1}\coloneqq \max(0, \min(\omega_{\max}, \omega_i^k + \rho_k g_i(x^k))) \in[0, \omega_{\max }]$, for all $i \in\{1, \ldots, m\}$. Set $k \coloneqq k + 1$ and go back to Step 1. 
\end{algorithmic}
\label{augmented_algm}
\end{algorithm}

The augmented Lagrangian method that we consider is given in Algorithm~\ref{augmented_algm}, which is based on ALGENCAN (see \cite[Algorithm 4.1]{andreani2007second}). The following result shows that Algorithm~\ref{augmented_algm} generates $\tilde S$-SAKKT2 points under Assumption \ref{assumption_limitfeasibility}.

\begin{theorem}
    Let $\{x^k \}$ be a sequence generated by Algorithm \ref{augmented_algm} and $x^*$ be any of its accumulation points. Suppose that $\{x^k \}$ satisfies Assumption \ref{assumption_limitfeasibility}. Then, $x^*$ is a $\tilde S$-SAKKT2 point.
\end{theorem}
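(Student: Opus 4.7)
The plan is to mirror the argument for the basic penalty method in Theorem~\ref{theo:basic_penalty}, treating the safeguarded multipliers $\mu^k, \omega^k$ used inside Algorithm~\ref{augmented_algm} as distinct from the ``generated'' multipliers that witness the $\tilde S$-SAKKT2 conditions. Along the subsequence $K \subseteq \mathbb{N}$ with $\lim_{K \ni k \to \infty} x^k = x^*$, I would define
$$\tilde\mu_j^k \coloneqq \mu_j^k + \rho_k h_j(x^k), \qquad \tilde\omega_i^k \coloneqq \rho_k \max\{0, g_i(x^k) + \omega_i^k/\rho_k\}^3.$$
A direct differentiation of $L_{\rho_k}$ rewrites the first estimate in~\eqref{augmented_condition} as
$$\Bigl\|\nabla f(x^k) + \sum_{j=1}^p \tilde\mu_j^k \nabla h_j(x^k) + \sum_{i=1}^m \tilde\omega_i^k \nabla g_i(x^k)\Bigr\| \leq \varepsilon_k,$$
so~\eqref{AKKT_LagrangianVanish} is immediate, and~\eqref{AKKT_FeasibilityOfEquality}--\eqref{AKKT_FeasibilityOfInequality} follow from Assumption~\ref{assumption_limitfeasibility} after replacing $\varepsilon_k$ by a larger tolerance $\bar\varepsilon_k \searrow 0$ that also dominates $\|h(x^k)\|$ and $\|\max\{0,g(x^k)\}\|$.

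Next I would verify the complementarity condition~\eqref{AKKT_Complementarity} and, by essentially the same case analysis, the vanishing of the penalty-Hessian terms needed for~\eqref{tildeSAKKT2_second}. For $i \in A(x^*)$ one has $g_i(x^k) \to 0$, so $\min\{\tilde\omega_i^k, -g_i(x^k)\} \to 0$ by the nonnegativity of $\tilde\omega_i^k$. For $i \in A(x^*)^C$ I claim $\tilde\omega_i^k = 0$ for all sufficiently large $k \in K$: since $\omega_i^k \in [0, \omega_{\max}]$ is bounded and $g_i(x^k) \to g_i(x^*) < 0$, this is immediate if $\rho_k \to \infty$; if instead $\rho_k$ stays bounded, then Step~4 of the algorithm forces $V_i^k = \max\{g_i(x^k), -\omega_i^k/\rho_k\} \to 0$, and since $g_i(x^k)$ is bounded away from $0$, the maximum must eventually be attained by $-\omega_i^k/\rho_k$, so $\omega_i^k/\rho_k \to 0$. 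Either way $g_i(x^k) + \omega_i^k/\rho_k < 0$ eventually, hence $\tilde\omega_i^k = 0$ as claimed.

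For the second-order part, from $\nabla^2_{xx} L_{\rho_k}(x^k, \mu^k, \omega^k) \succeq -\varepsilon_k \mathbb{I}$ I would expand, for each $d \in \tilde S(x^k, x^*, \tilde\omega^k)$,
\begin{align*}
d^\top \nabla^2_{xx} L_{\rho_k}(x^k, \mu^k, \omega^k) d
&= d^\top \nabla^2_{xx} L(x^k, \tilde\mu^k, \tilde\omega^k) d \\
&\quad + \rho_k \sum_{j=1}^p (\nabla h_j(x^k)^\top d)^2 \\
&\quad + 3\rho_k \sum_{i=1}^m \max\{0, g_i(x^k) + \omega_i^k/\rho_k\}^2 (\nabla g_i(x^k)^\top d)^2.
\end{align*}
The equality-constraint sum vanishes by the definition of $\tilde S$; for each term in the inequality sum, exactly one of the following applies: $i \in A(x^*)^C$ and the $\max$ is $0$ eventually (by the claim above); $i \in A(x^*)$ with $\tilde\omega_i^k = 0$, so the $\max$ is again $0$ by the definition of $\tilde\omega_i^k$; or $i \in A(x^*)$ with $\tilde\omega_i^k > 0$, in which case $\nabla g_i(x^k)^\top d = 0$ by membership in $\tilde S$. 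Hence every penalty-Hessian term disappears and one is left with $d^\top \nabla^2_{xx} L(x^k, \tilde\mu^k, \tilde\omega^k) d \geq -\varepsilon_k \|d\|^2$, which is~\eqref{tildeSAKKT2_second}. The main obstacle I anticipate is the bounded-$\rho_k$ branch of the case analysis on inactive indices: unlike the setting of Algorithm~\ref{algm_penalty}, one cannot simply rely on $\rho_k \to \infty$ to drive $\omega_i^k/\rho_k$ to zero, and one must instead invoke the update rule in Step~4 of Algorithm~\ref{augmented_algm} to conclude $V^k \to 0$ and thus $\omega_i^k/\rho_k \to 0$ on $A(x^*)^C$.
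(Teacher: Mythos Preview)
Your proposal is correct and follows essentially the same route as the paper's proof: define the shifted multipliers $\tilde\mu^k,\tilde\omega^k$, expand $\nabla_x L_{\rho_k}$ and $\nabla_{xx}^2 L_{\rho_k}$, and observe that every extra penalty-Hessian term vanishes on $\tilde S(x^k,x^*,\tilde\omega^k)$ once $\tilde\omega_i^k=0$ for $i\in A(x^*)^C$. The only difference is that the paper dispatches the latter fact by citing \cite[Theorem~4.1]{andreani2007second}, whereas you supply a self-contained two-case argument (unbounded~$\rho_k$ versus bounded~$\rho_k$ with $V^k\to 0$ via Step~4); your version is in fact more complete, as the paper's own proof is silent on the AKKT verification.
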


\begin{proof}
    Since $x^*$ is an accumulation point, there is an infinite subset $K \subseteq \mathbb{N}$ such that for all $k \in K$, we have $\lim_{K \ni k\to \infty} x^k = x^*$. 
     To see that $x ^*$ fulfills the $\tilde S$-SAKKT2 conditions, first note that \eqref{augmented_condition} gives 
\begin{equation}
    \| \nabla_x L_{\rho_k}(x^k, \hat \mu^k, \hat \omega^k) \| \leq \varepsilon_k
\end{equation}
and
\begin{align}
  \nabla^2 L_{\rho_k}(x^k, \hat \mu^k, \hat \omega^k)  
     & + \sum_{j=1}^p \rho_k \nabla h_j(x^k) \nabla h_j(x^k) ^\top \nonumber \\ & + 
    3\sum_{i=1}^m \rho_k  \operatorname{max}\{0, g_i(x^k) + \frac{\omega^k_i}{\rho^k}\}^2 \nabla g_i(x^k) \nabla g_i(x^k)^\top \succeq -\varepsilon_k \mathbb{I}
    \label{augmented_soc}  
\end{align}
where $\hat \mu^k_j \coloneqq \mu^k_j + \rho_k h_j(x^k)$ and $\hat \omega^k_i \coloneqq \rho_k \max \left\{0, g_i(x^k) + \frac{\omega^k_i}{\rho_k}\right\}^3$.

It has been shown in \cite[Theorem 4.1]{andreani2007second} that for $k$ large enough, $\hat \omega^k_i = 0$ for all $i \notin A(x^*)$. Moreover, by the definition of $\hat \omega^k_i$, we notice that $\hat \omega^k_i = 0$ if and only if $g_i(x^k) + \frac{\omega^k_i}{\rho_k} \leq 0$, which means the \eqref{augmented_soc} could be rewritten as 
\begin{align}
  & \nabla_x^2 L_{\rho_k}(x^k, \hat \mu^k, \hat \omega^k)  
      + \sum_{j=1}^p \rho_k \nabla h_j(x^k) \nabla h_j(x^k) ^\top \nonumber \\ & + 
    3\sum_{i: i \in A(x^*) \text{ with } \hat \omega^k_i > 0}^m \rho_k  \operatorname{max}\left\{0, g_i(x^k) + \frac{\omega^k_i}{\rho_k}\right\}^2 \nabla g_i(x^k) \nabla g_i(x^k)^\top \succeq -\varepsilon_k \mathbb{I}.
    \label{augmented_soc_trans}  
\end{align}
Then by choosing any $d \in \tilde S(x^k, x^*, \hat \omega^k)$, we get 
$$d ^\top \nabla_x^2 L_{\rho_k}(x^k, \hat \mu^k, \hat \omega^k) d \geq \varepsilon_k \|d\|^2,$$ which shows that the augmented Lagrangian method generates $\tilde S$-SAKKT2 sequences.
\end{proof}


\section{Conclusion}

In this paper, we introduced a new strong second-order sequential optimality condition for \eqref{NLP}. This condition strengthens existing sequential optimality conditions by offering a more tight framework that does not rely on restrictive assumptions. Our work demonstrates that this condition holds as a necessary condition for local minima without the need for constraint qualifications, further broadening its applicability in various \eqref{NLP} settings.

Additionally, we established the global convergence of penalty methods and an augmented Lagrangian method under weak assumptions, as it shows that these methods can be effectively applied to solve \eqref{NLP} problems while generating points that satisfy the proposed strong second-order sequential optimality condition. The introduction of quartic penalization for inequality constraints further enhances the practicality of these algorithms. 

For future work, we can explore other optimization methods that generate $\tilde S$-SAKKT2 points. In particular, we plan to investigate the use of the Sequential Quadratic Programming (SQP) method that may generate such points. This extension would further enhance the flexibility and applicability of our framework, allowing for more diverse algorithms to be used in solving \eqref{NLP} problems. Another idea is to define analogous strong 
second-order conditions using the so-called complementarity AKKT (CAKKT)~\cite{haeser2018second}.

\section*{Acknowledgements} 
This work was supported by the JST SPRING (JPMJSP2110), Japan Society for the Promotion of Science, 
Grant-in-Aid for Scientific Research (C) (JP19K11840, JP21K11769) and 
Grant-in-Aid for Early-Career Scientists (JP21K17709).


\bibliographystyle{plain}
\bibliography{references.bib}

\end{document}